\newtheorem{thm}{Theorem}[section]
\newtheorem{prop}[thm]{Proposition}
\newtheorem{cor}[thm]{Corollary}
\newtheorem{defin}[thm]{Definition}
\newtheorem{rem}[thm]{Remark}
\newtheorem{exam}[thm]{Example}
\newtheorem{question}[thm]{Question}
\newcommand{\R}{{\mathbb{R}}}
\newcommand{\T}{{\mathbb{T}}}
\newcommand{\Z}{{\mathbb{Z}}}
\newcommand{\N}{{\mathbb{N}}}
\newcommand{\C}{{\mathbb{C}}}
\newcommand{\SP}{{\mathbb{S}}}
\newcommand{\im}{{\it Im\, }}
\newcommand{\cC}{{\mathcal{C}}}
\newcommand{\cF}{{\mathcal{F}}}
\newcommand{\cH}{{\mathcal{H}}}
\newcommand{\cI}{{\mathcal{I}}}
\newcommand{\cL}{{\mathcal{L}}}
\newcommand{\hX}{{\widehat{X}}}
\newcommand{\hY}{{\widehat{Y}}}
\newcommand{\bkappa}{{\bar{\kappa}}}
\newcommand{\hpb}{\widehat{pb}}
\newcommand{\supp}{{\rm supp\ }}
\newcommand{\Qed}{\ \hfill \qedsymbol \bigskip}
\newcommand{\op}{{ \it Op}}
\DeclareMathOperator{\sgrad}{sgrad}
\begin{document}

\title{Lagrangian tetragons and instabilities in Hamiltonian dynamics}

\author{\textsc Michael Entov$^{a}$,\ Leonid
Polterovich$^{b}$ }

\footnotetext[1]{Partially supported by the Israel Science
Foundation grant $\#$ 1096/14 and by M. \& M. Bank Mathematics
Research Fund.}

\footnotetext[2]{ Partially supported by the Israel Science Foundation grant 178/13 and
the European Research Council Advanced grant 338809. }

\date{\today}

\maketitle

\begin{abstract}
We present a new existence mechanism, based on symplectic topology,
for orbits of Hamiltonian flows connecting a pair of disjoint subsets in the phase space.
The method involves function theory on symplectic manifolds combined with rigidity of Lagrangian
submanifolds. Applications include superconductivity channels in nearly
integrable systems and dynamics near a perturbed unstable equilibrium.

\end{abstract}

\tableofcontents

\section{Introduction and main results}
\label{sec-intro}

Given a Hamiltonian flow and a pair of disjoint subsets in the phase
space, does there exist an orbit connecting these subsets?  Various
instances of this question arise in the study of instabilities in
Hamiltonian dynamics. In this paper we combine two seemingly
remote approaches to detecting such orbits:
\begin{itemize}
\item the one based on the Poisson bracket invariants coming from function theory on symplectic manifolds \cite{BEP};

\item a geometric construction due to Mohnke \cite{Mohnke}, yielding what we call below a {\it Lagrangian tetragon},
which enabled him at the time to confirm a version of Arnold's chord
conjecture in Reeb dynamics.
\end{itemize}
The obtained package turns out to be efficient in a number of
specific models, including superconductivity channels in nearly
integrable systems and dynamics near a perturbed unstable
equilibrium that will be discussed below. The proposed existence
mechanism for connecting orbits is robust with respect to
perturbations of the Hamiltonian in the uniform norm.

\subsection{Interlinking: a chord existence mechanism}
\medskip
\noindent {\sc Setting the stage.}
Given an arbitrary smooth (possibly time-dependent) vector
field $v$ on a smooth manifold, the piece of its integral trajectory
defined over a time interval $[t_0,t_1]$, $t_0<t_1$, is called a
{\it chord of $v$}, or {\it a chord of the flow of $v$}, of {\it time-length} $t_1-t_0$. If such a chord passes
at the time $t_0$ through a set $X_0$ and at the time $t_1$ through
a set $X_1$ we say that it is {\it a chord from $X_0$ to $X_1$}.

If the flow of a vector field is defined everywhere for all times,
we say that the vector field is {\it complete}.

Let $(M,\omega)$ be a connected symplectic manifold. Consider a
smooth time-periodic Hamiltonian $G: M \times \SP^1 \to \R$, where
$\SP^1=\R/\Z$.
Given a pair of disjoint compact subsets $X_0,X_1 \subset M$, {\it a
(Hamiltonian) chord} of $G$ from $X_0$ to $X_1$ is a chord from
$X_0$ to $X_1$ of the Hamiltonian vector field defined by $G$. If
this vector field is complete, we say that $G$ is {\it complete}.

\medskip
\noindent{\sc Separation.} A function $G\in C^\infty (M\times \SP^1)$ {\it $\Delta$-separates} two
disjoint compact sets $Y_0,Y_1\subset M$ if
$$\Delta= \Delta(G;Y_0,Y_1): = \min_{Y_1\times \SP^1} G - \max_{Y_0\times \SP^1} G >0\;.$$
Note that in this definition the order of sets $Y_0$ and $Y_1$ is important: $G$ is larger on $Y_1$ than
on $Y_0$.

\medskip
\noindent{\sc Interlinking.}

\begin{defin}
\label{defin-interlinking} {\rm Let $(X_0,X_1)$, $(Y_0,Y_1)$ be two
pairs of disjoint sets: $X_0\cap X_1=Y_0\cap Y_1=\emptyset$.

We say
that the pair $(Y_0,Y_1)$ {\it $\kappa$-interlinks} the pair
$(X_0,X_1)$, $\kappa
>0$, if every complete (time-dependent) Hamiltonian which $\Delta$-separates $Y_0$
and $Y_1$ admits a chord from $X_0$ to $X_1$ of time-length $\leq
\kappa/\Delta$.

If this property is known to hold only for complete {\sl autonomous} Hamiltonians on $M$,
we say that the pair $(Y_0,Y_1)$ {\it autonomously $\kappa$-interlinks} the pair
$(X_0,X_1)$.
}
\end{defin}

Interlinking is the central phenomenon discussed in the present
paper. The existence of interlinking pairs is non-obvious and will
be discussed below. The interlinking of $(X_0,X_1)$ and $(Y_0,Y_1)$
can be formulated in terms of a Poisson bracket
invariant of the quadruple $(X_0,X_1,Y_0,Y_1)$ -- see
Section~\ref{sec-intro-pb4+}.

We will also discuss a stable version of the interlinking
phenomenon. Namely, given a closed connected manifold $K$, define the {\it
$K$-stabilization} of a subset $X \subset M$ as is its product
$X\times K\subset M\times T^* K$ with the zero-section $K$ of $T^*
K$ (the trivial case $K=T^* K= \{\textrm{a point}\}$ is also allowed;
thus, a set can be viewed as a trivial stabilization of itself).
We say that a pair $(Y_0,Y_1)$ {\it stably $\kappa$-interlinks}
the pair $(X_0,X_1)$ if for all $m\in\Z_{\geq 0}$ the pair
$(Y_0\times \T^m, Y_1\times \T^m)$ $\kappa$-interlinks the pair
$(X_0\times \T^m, X_1\times \T^m)$, where $\T^m$ is the $m$-dimensional torus.

\medskip
\noindent {\sc Robustness.} Observe that the existence of a
Hamiltonian chord of $G$ provided by the interlinking is robust with
respect to perturbations of $G$ for which the perturbed Hamiltonian
$G+F$ is complete and the perturbation term $F$ is sufficiently
small on $Y_0 \cup Y_1$. Indeed,
\begin{equation}\label{eq-separ-robust}
\Delta(G+F;Y_0,Y_1) \geq \Delta(G;Y_0,Y_1)-|\Delta(F;Y_0,Y_1)|\;.
\end{equation}
Therefore if $(Y_0,Y_1)$ $\kappa$-interlinks $(X_0,X_1)$ and $G$
$\gamma$-separates $X_0$ and $X_1$, the perturbed Hamiltonian $G+F$
has a chord from $X_0$ to $X_1$ of time-length $\leq
\kappa/(\gamma-\delta)$, where $\delta:=|\Delta(F;Y_0,Y_1)|$ is
assumed to be less than $\gamma$.

Let us emphasize that the perturbation $F$ can be arbitrarily large
away from $Y_0 \cup Y_1$ and can have large derivatives everywhere
-- thus the dynamics generated by $G+F$ might be completely
different from the dynamics generated by $G$ and the
chord of $G+F$ does not have to be close in any sense to the
chord of $G$.

\subsection{Introducing Lagrangian tetragons}
\label{subsec-intro-tetragons}
Next, we illustrate the notion of
interlinking for a special class of examples which plays a key role
in dynamical applications discussed further in the paper. The following construction
originates in the work of Mohnke \cite{Mohnke} (for another application of Mohnke's
construction to Hamiltonian dynamics see \cite{Lisi}).

Let $(\Sigma^{2k-1},\xi)$, $k\geq 1$, be a (not necessarily closed)
contact manifold with a co-orientable contact structure $\xi$ and
let $L$ be a closed connected Legendrian submanifold $L$ of
$\Sigma$. (If $k=1$, the contact structure $\xi$ is formed by the
zero subspaces of the tangent spaces of $\Sigma$ and the Legendrian
submanifold $L$ is just a point).

Let us make the following additional choices:

\bigskip
\noindent (C1) Pick a contact 1-form $\lambda_0$ on $\Sigma$,
$\xi=\ker \lambda_0$ (if $k=1$ we let $\lambda_0$ be any
non-vanishing 1-form on $\Sigma$). Denote by $\psi_t: \Sigma\to
\Sigma$ the Reeb flow of $\lambda_0$. (Recall that a contact form
$\lambda_0$ defines a vector field, called  Reeb vector field $v$,
by two conditions: $i_vd\lambda_0 =0$ and $\lambda_0 (v)=1$. The
flow of $v$ is called the Reeb flow of $\lambda_0$).

\bigskip
\noindent (C2) Pick any $T>0$ such that $\psi_t (L)$ is well-defined
and disjoint from $L$ for all $t\in (0,T]$ -- such a $T$ exists
since $L$ closed and tangent to the contact structure $\xi$ while
the Reeb vector  field $v$ generating the flow $\{ \psi_t\}$ is
nowhere tangent to $\xi$.

\bigskip
\noindent (C3) Let $0<R_0<R_1$.

\bigskip
\noindent (C4) Let $K$ be a closed connected manifold identified
with the zero-section of $T^* K$.

\bigskip
We will view the symplectization of $(\Sigma,\xi)$ as the symplectic
manifold $(\Sigma\times\R_+, d(s\lambda_0))$, where $s$ is the
coordinate on the $\R_+$-factor of $\Sigma\times\R_+$. Consider the
following quadruple of Lagrangian submanifolds (with boundary)
$\cF,\cC, \cL,\cH \subset \Sigma\times \R_+\times T^* K$:
\[
\cF:= \bigcup_{0\leq t\leq T} \psi_t (L)\times R_0 \times K,\;\;
\cC := \bigcup_{0\leq t\leq T} \psi_t (L)\times R_1 \times K,
\]
\[
\cL := \psi_T (L)\times [R_0,R_1]\times K,\ \
\cH := L\times [R_0,R_1] \times K.
\]

\begin{defin}
\label{defin-Lagr-tetragon} {\rm The union $\Lambda := \cF \cup \cC
\cup \cL \cup \cH $ is called a {\it Lagrangian tetragon}. The sets
$\cF$ and $\cC$ are called its {\it floor} and {\it ceiling} while
$\cL$ and $\cH$ the {\it low} and the {\it high} walls,
respectively.}
\end{defin}

We shall need the following glossary.

Assume that $U \subset \Sigma$ is a domain, $\cI \subset \R_+$ is an open interval containing $[R_0,R_1]$
and $W$ is an open tubular neighborhood of
$K$ in $T^* K$, so that $U\times\cI\times W$ contains the Lagrangian tetragon
$\Lambda$. The image of $\Lambda$ in an arbitrary symplectic
manifold $(M,\omega)$ under a symplectic embedding $U\times \cI\times W\to M$
will be called {\it a Lagrangian tetragon in $M$}. We will view it as a ``transplant" of the {\it original}
Lagrangian tetragon $\Lambda\subset \Sigma\times \R_+\times T^* K$ to
$(M,\omega)$. For the remainder of this section
we will use the same notation for an original Lagrangian tetragon and for its ``transplant".

A Lagrangian tetragon $\Lambda = \cF \cup \cC \cup \cL \cup \cH \subset (M,\omega)$ is
called {\it (stably/auto\-nomously) $\kappa$-interlinked (in $(M,\omega)$)} if the pair $(\cL,\cH)$
(stably/auto\-no\-mously) $\kappa$-interlinks the pair $(\cF,\cC)$ in $(M,\omega)$. Roughly speaking,
every complete (time-perio\-dic) Hamiltonian on $(M,\omega)$ which is larger on the high wall than on the low
wall admits a chord from the floor to the ceiling.

\medskip
\noindent
\begin{exam} [Prototype example]\label{exam-prototype}
{\rm Take $\Sigma$ to be the circle $\SP^1 = \R/\Z$ equipped with
the trivial $0$-dimensional contact structure determined by the
contact form $\lambda_0=du$, where $u$ is the coordinate on $\R$.
Fix a point $v \in \SP^1$ considered as a Legendrian submanifold of
$\Sigma$, along with some numbers $0 < R_0 < R_1$ and $0<T < 1$. Let
$K=\{\textrm{a point}\}$. The corresponding Lagrangian tetragon
$\Lambda$ in the symplectization of $\Sigma$ is a quadrilateral of
area $a= T(R_1-R_0)$ in the cylinder $\SP^1 (u) \times \R_+ (s)$,
with the area form $ds\wedge du$. It was shown in \cite{BEP} that
$\Lambda$ is stably $a$-interlinked. Moreover, this result is sharp:
$\Lambda$ is not $a'$-interlinked with any $a' < a$ (see
Remark~\ref{rem-the-ineq-for-pb4+-is-exact-for-cot-bundle-of-1-dim-mfd}).
}
\end{exam}

\medskip
\noindent This example can be extended to higher dimensions in two
different ways.

\medskip
\noindent
\begin{exam} \label{exam-scc-prep}
{\rm Let $T^*\T^k = \R^k \times \T^k$, $k>1$, be the cotangent bundle of the
torus  with the canonical coordinates $p \in \R^k$ and $q \in
\T^k=\R^k/\Z^k$ so that the symplectic structure is given by $dp
\wedge dq$. Let $|\cdot|$ denote the Euclidean norm. Consider the
contact manifold $\Sigma = \{|p|=1\} \subset T^*\T^k$
(called the space of co-oriented contact elements to the torus),
equipped with the contact form $\lambda_0 = pdq$.  The corresponding
Reeb flow is simply the Euclidean geodesic flow $(p,q) \to
(p,q+pt)$. The symplectization $\Sigma \times \R_+$ is identified symplectically
with $T^*\T^k \setminus \T^k$ by $(p,q,s) \mapsto (sp,q)$. Consider
the Lagrangian tetragon $\Lambda$ in $T^*\T^k$ associated to the Legendrian fiber
$\{|p|=1, q=0\}$ of $\Sigma$, some $0 < R_0 < R_1$,
$T>0$ and $K=\{\textrm{a point}\}$. It is well-defined provided $0< T<1/2$. Observe that its
floor, ceiling and the low wall lie in the boundary of the domain
$$E:= \{R_0 < |p| < R_1\} \times \{|q| < T\}\;,$$
while the high wall is the central fiber of the fibration $E \to \{|q| < T\}$.}
\end{exam}

\begin{thm}\label{thm-inter-1} $\Lambda$ is stably $\kappa$-interlinked with
$\kappa=T(R_1-R_0)$.
\end{thm}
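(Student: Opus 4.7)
The plan is to reduce Theorem~\ref{thm-inter-1} to a lower bound on the Poisson bracket invariant
\[
pb_4^+(\cF\times \T^m,\cC\times \T^m,\cL\times \T^m,\cH\times \T^m)\ \ge\ \frac{1}{T(R_1-R_0)}
\]
on the stabilized spaces $T^*\T^k\times T^*\T^m$ for every $m\ge 0$, following the general correspondence between interlinking and Poisson bracket invariants alluded to in Section~\ref{sec-intro} (and developed in \cite{BEP}), and then to prove that bound by a Floer-theoretic argument in the spirit of Mohnke's tetragon construction.

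A key preliminary observation is that the time-$T$ Hamiltonian flow of $H(P,q)=|P|$ on $\{P\ne 0\}$ carries $\cH$ onto $\cL$: indeed, $H$ generates the radial geodesic flow $(P,q)\mapsto(P,q+tP/|P|)$, so at time $t=T$ the fiber slab $\cH=\{q=0\}\cap\{R_0\le |P|\le R_1\}$ is sent exactly to $\cL=\{q=PT/|P|\}\cap\{R_0\le|P|\le R_1\}$. The symplectic area swept during this displacement equals $T(R_1-R_0)$, which is precisely the ``action gap'' $\kappa$ appearing in the interlinking estimate; moreover, the floor $\cF$ and the ceiling $\cC$ are the traces of this flow on the radial spheres $\{|P|=R_0\}$ and $\{|P|=R_1\}$, so the whole tetragon is assembled from a single displacement of $\cH$ by the Reeb/geodesic flow.

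The main step is to show that for any complete time-periodic Hamiltonian $G$ on $T^*\T^k\times T^*\T^m$ which $\Delta$-separates the stabilized walls, there exists a chord of $G$ from $\cF\times \T^m$ to $\cC\times \T^m$ of time-length at most $T(R_1-R_0)/\Delta$. One sets up a moduli space of pseudo-holomorphic quadrilaterals whose four boundary arcs lie on $\cF,\cL,\cC,\cH$ in cyclic order, with a Hamiltonian perturbation provided by $G$; the non-vanishing of an associated Lagrangian Floer invariant then forces the existence of such a chord. The quantitative bound on its time-length falls out of the standard action--energy estimate for the Floer equation, calibrated against the symplectic area $T(R_1-R_0)$ computed above, together with the hypothesis $\Delta(G;\cL,\cH)\ge \Delta$.

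The hard part is the Floer-theoretic setup, since the four pieces $\cF,\cC,\cL,\cH$ are Lagrangians \emph{with} boundary meeting at corners rather than closed Lagrangian submanifolds. This forces a careful localization and confinement argument, exploiting the geometric fact noted in Example~\ref{exam-scc-prep} that the floor, ceiling, and low wall all lie on the boundary of the bounded domain $E=\{R_0<|P|<R_1\}\times\{|q|<T\}\subset T^*\T^k$, while the high wall is the central fiber of the fibration $E\to\{|q|<T\}$. This geometry is what allows one to trap holomorphic curves inside a controlled region. The stabilization by $\T^m$, finally, serves to supply enough monotonicity/grading to make the Floer machinery behave --- which is exactly why the theorem is phrased in terms of \emph{stable} interlinking.
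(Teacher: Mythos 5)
Your high-level strategy---reduce to a lower bound on $pb_4^+$ of the four pieces (stabilized), then argue the lower bound---matches the paper's framework, and your preliminary geometric observations (the geodesic flow carries $\cH$ to $\cL$, the swept area is $T(R_1-R_0)$, the walls and floor/ceiling are assembled from one displacement) are correct and reflect what is going on.

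However, the central step of your proposal is a genuine gap. You propose to obtain the $pb_4^+$ lower bound via a moduli space of pseudo-holomorphic quadrilaterals with boundary on $\cF,\cL,\cC,\cH$ in cyclic order. This Floer-theoretic framework for Lagrangians \emph{with boundary and corners} is not something that exists off the shelf, and you acknowledge as much (``the hard part''), but then offer no actual construction, no compactness/transversality discussion, and no invariant whose nonvanishing you could verify. This is precisely the difficulty the paper circumvents by a completely different mechanism: it smoothens the corners of the tetragon into a genuine \emph{closed} Lagrangian submanifold $\Lambda_\varepsilon$, proves the $pb_4^+$ lower bound by a Moser-type deformation argument (Proposition~\ref{prop-FdG-Lambda}: if $\max\{F,G\}$ were too small, deforming $\omega$ by $\tau\, dF\wedge dG$ would produce an exact Lagrangian in the isotopy class of $\Lambda_\varepsilon$), and then rules out such an exact Lagrangian in $T^*\T^k\times T^*\T^m$ via Kragh's degree theorem (Theorem~\ref{thm-no-exact-Lagr-isotopic-to-Lambda-in-cotangent-bundles}). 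Your proposal never touches the ``no exact Lagrangian in the isotopy class'' input, which is actually the crux of the argument.

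Two further points. First, there is a logical conflation: you say the plan is to establish a $pb_4^+$ lower bound, but then your ``main step'' is a direct chord-existence statement for arbitrary separating Hamiltonians $G$---that is, you would be proving interlinking directly rather than proving the inequality on $pb_4^+$ (which concerns compactly supported $F,G$ with $\max\{F,G\}$, not flows). The two are equivalent only via Theorem~\ref{thm-pb-new}, whose proof requires Fathi's Theorem~\ref{thm-fathi} to control the \emph{direction} of the chord, an ingredient you do not mention. Second, your explanation of the role of the $\T^m$-stabilization (``monotonicity/grading'' for Floer theory) is incorrect in this context: the stabilization appears because the \emph{definition} of stable interlinking requires interlinking in $M\times T^*\T^m$ for all $m$, and the paper discharges it by applying Kragh's theorem to $T^*(V\times\T^m)$ and the non-surjectivity of the projection $\Lambda_\varepsilon\times\T^m\to V\times\T^m$.
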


\noindent
For the proof see Section~\ref{subsec-no-exact-Lagr-submfds-pfs}.

\medskip
\noindent
\begin{exam} \label{exam-ue-prep}
{\rm Consider the unit sphere $\SP^{2k-1}$ in the standard
symplectic space $\R^{2k}=\C^k$ equipped with the complex coordinates
$z=p+iq$ (in the vector notation) and the symplectic form $dp \wedge dq$. The sphere carries
a contact structure $\xi_{st}$ given by its field of complex tangencies which
is defined by the contact form $\lambda_0 = \frac{1}{2}(pdq-qdp)$. The
corresponding Reeb flow is given by $z \mapsto e^{2it}z$. The
symplectization $\Sigma \times \R_+$ is identified symplectically with
$\C^k\setminus 0$  by $(z,s) \mapsto \sqrt{s}z$.  Consider the
Lagrangian tetragon $\Lambda$ in $\C^k$ associated to the Legendrian sphere
$\{|p|=1, q=0\}$, some numbers $0 < R_0 < R_1$, $T=\pi/4$, and $K=\{\textrm{a point}\}$. Observe
that the high and the low walls of $\Lambda$ lie in spherical shells
$\{R_0^{1/2} \leq |p| \leq R_1^{1/2}, q=0\}$ and  $\{p=0,\;R_0^{1/2}
\leq |q| \leq R_1^{1/2}\}$ and  in the $p$- and in the $q$-space,
respectively. Its floor and ceiling lie in the
$(2k-1)$-dimensional spheres in $\C^k$ of radii $R_0^{1/2}$ and
$R_1^{1/2}$, respectively.}
\end{exam}

\begin{thm}\label{thm-inter-2}
$\Lambda$ is stably $\kappa$-interlinked with
$\kappa= \pi (R_1-R_0)/4$.
\end{thm}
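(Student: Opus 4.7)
The plan is to reduce Theorem~\ref{thm-inter-2} to the general interlinking mechanism for Lagrangian tetragons in symplectizations that underlies Theorem~\ref{thm-inter-1} (whose proof is carried out in Section~\ref{subsec-no-exact-Lagr-submfds-pfs}), applied to the contact data $(\SP^{2k-1}, \xi_{st}, \lambda_0, L)$ of Example~\ref{exam-ue-prep}. The value $\kappa = T(R_1-R_0) = \pi(R_1-R_0)/4$ is the area of the ``cross-section'' quadrilateral of the prototype Lagrangian tetragon from Example~\ref{exam-prototype}, and it should drop out of the construction once the hypotheses of the general mechanism are verified.

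First I would record the contact-geometric computations. A direct calculation gives $\lambda_0|_L \equiv 0$, so $L = \{|p|=1,\, q=0\} \cong \SP^{k-1}$ is Legendrian; the Reeb vector field of $\lambda_0$ is $v_z = 2iz$, giving the Reeb flow $\psi_t(z) = e^{2it}z$. Since $e^{2it}p$ is real exactly when $2t \in \pi\Z$, we have $\psi_t(L) \cap L = \emptyset$ for all $t \in (0,\pi/2)$, so $T = \pi/4$ comfortably fits (C2). Under the symplectomorphism $(z,s) \mapsto \sqrt{s}\, z$ identifying the symplectization $\SP^{2k-1}\times \R_+$ with $\C^k \setminus \{0\}$, a short check shows the floor, ceiling, low and high walls of $\Lambda$ take the explicit form stated in Example~\ref{exam-ue-prep}. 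Stability under $K$-stabilization is automatic from the construction once we run the same verification with $K$ replaced by $K \times \T^m$.

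Second, I would apply the general interlinking theorem. This theorem provides a Floer-theoretic lower bound on the Poisson bracket invariant $\hpb_4^+(\cF,\cC,\cL,\cH)$ of the quadruple, and this bound translates, through the mechanism linking $\hpb_4^+$ to chord existence that is introduced in Section~\ref{sec-intro-pb4+}, into stable $\kappa$-interlinking with $\kappa = T(R_1-R_0)$.

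The main obstacle, and the essential new ingredient relative to the $T^*\T^k$ setting of Theorem~\ref{thm-inter-1}, is that $\C^k$ contains no closed exact Lagrangians, so the Floer-theoretic positivity cannot be harvested from a non-displaceable ambient Lagrangian such as the zero section of a cotangent bundle. I would get around this by using that $L$ bounds the exact Lagrangian real disk $\{|p|\leq 1,\, q=0\}$ in $\C^k$, while $\psi_{\pi/4}(L)$ bounds the imaginary disk $\{p=0,\,|q|\leq 1\}$; capping the walls $\cH$ and $\cL$ with appropriate rescalings of these disks closes the tetragon into a configuration whose Lagrangian Floer theory is controlled by action values $TR_0$ and $TR_1$. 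Showing that this capping procedure quantitatively reproduces the bound $\hpb_4^+ \geq 1/(T(R_1-R_0))$, and that it is compatible with the stabilization by $\T^m$ via a Künneth-type argument, is the technical crux.
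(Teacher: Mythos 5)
You have the mechanism inverted, and the inversion is not a detail---it is where the proof lives. You write that the absence of closed exact Lagrangians in $\C^k$ is ``the main obstacle'' and that ``Floer-theoretic positivity cannot be harvested from a non-displaceable ambient Lagrangian,'' and you then try to route around this by capping the walls $\cH$, $\cL$ with the real and imaginary disks and running some boundary Lagrangian Floer theory. But in the paper's scheme (Theorem~\ref{thm-pb4-Mohnke} and Corollary~\ref{cor-main}, which rest on Proposition~\ref{prop-FdG-Lambda}), the non-existence of closed exact Lagrangians is the \emph{hypothesis}, not an obstruction. The argument is by contradiction: if $\max_M\{F,G\}$ were smaller than $\big((R_1-R_0)T\big)^{-1}$, the Moser deformation $\omega_\tau = \omega + \tau\, dF\wedge dG$ together with the primitive $\lambda + \tau F\,dG$ would manufacture a closed \emph{exact} Lagrangian in the Lagrangian isotopy class of $\Lambda_\varepsilon$. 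Ruling out such exact Lagrangians therefore forces the lower bound on $pb_4^+$. So the fewer exact Lagrangians the ambient manifold admits, the \emph{easier} the theorem becomes: $\C^k$ is a subcritical Weinstein manifold (Example~\ref{exam-S-2k-1}), hence by Biran--Cieliebak every compact set in $\C^k\times T^*\T^m$ is displaceable, and Gromov's theorem then says there are no closed exact Lagrangians at all (Theorem~\ref{thm-interlinking-in-subcritical-Weinstein-mfd}). Corollary~\ref{cor-main} now applies verbatim with $\kappa = T(R_1-R_0) = \pi(R_1-R_0)/4$.

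Your disk-capping proposal is therefore solving a non-problem, and it is not a harmless detour: capping with Lagrangian disks takes you out of the closed-Lagrangian framework on which Proposition~\ref{prop-FdG-Lambda} and the cited rigidity results (Gromov, Kragh) are built, so none of the paper's machinery applies to your configuration, and you would need to develop an independent relative Floer theory and re-derive a quantitative $pb_4^+$ bound from scratch. Nothing in the paper supports that route, and it is considerably harder than the intended one-line reduction. Your first two paragraphs (the contact-geometric verification and the appeal to Theorem~\ref{thm-pb-new} to pass from $\hpb_4^+$ to interlinking) are correct and match the paper; the error is confined to the ``main obstacle'' paragraph. Also note, as a smaller point, that even in the $T^*\T^k$ case of Theorem~\ref{thm-inter-1} the paper does not ``harvest positivity from the zero section'': there it uses Kragh's degree theorem to show $\Lambda_\varepsilon$ is not isotopic to an exact Lagrangian, again feeding the same non-existence hypothesis.
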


\noindent
For the proof see Section~\ref{subsec-no-exact-Lagr-submfds-pfs}.

\subsection{Superconductivity channels}\label{subsec-scc}

Let us now present applications of the results above to
specific Hamiltonian systems.

The first application is related to an example that appears in the
famous work of Nekhoroshev \cite{Nekhoroshev-UMN} on the long-term
stability of nearly integrable systems\footnote{Our interest to this
model was triggered by a recent paper \cite{Bounemoura-Kaloshin} by
Bounemoura and Kaloshin.}.

Namely, let $\phi,I$ denote the standard action-angle coordinates on
the cotangent bundle $T^* \T^k = \T^k\times \R^k$ of a torus $\T^k$
(the $\phi$ coordinates are defined mod $1$ and the standard
symplectic form on $T^* \T^k$ is written as $dI\wedge d\phi$).
Nekhoroshev considered (analytic) Hamiltonians of the form
\begin{equation}
\label{eqn-nekhoroshev-hamiltonians} H(\phi,I) = h(I) + \epsilon
f(I, \phi),
\end{equation}
where $\epsilon$ is a small parameter, and studied the long-term
behavior $I(t)$ of the action variables $I$ along trajectories
of the flow generated by $H$. His main discovery was that if $h$ is
a so-called steep function (that is, its restriction to any
affine subspace of $\R^k$ has only isolated critical points), then, as long as
$\epsilon$ is sufficiently small, $I(t)$ stays close to $I(0)$ for
exponentially long times. At the same time Nekhoroshev gave an
explicit example of a Hamiltonian system of type
\eqref{eqn-nekhoroshev-hamiltonians} with two degrees of freedom
showing that if $h$ is not steep, then even for small $\epsilon$ the
actions $I(t)$ may grow linearly fast with $t$ (the so-called ``fast
diffusion" phenomenon) along certain chords of $H$ projecting to
straight intervals in a level set of $h$ in the $I$-coordinate space
(the so-called ``superconductivity channels").

Here we present a multi-dimensional Nekhoroshev-type example where
the linear growth phenomenon is robust with respect to
perturbations of the Hamiltonian that are $C^0$-small on a certain
``thin" subset of the phase space (but may be $C^1$-large
everywhere).

Namely, consider the time-independent Hamiltonian
\begin{equation}
\label{eq-H-Neh}
H(p,q,p',q') = \sum_{i=1}^k p_i h_i(p') +U(q)
\end{equation}
on the symplectic manifold
$$M:= T^*\T^k (p,q) \times T^*\T^m (p',q') \;.$$
Here the $p,p'$ and $q,q'$-coordinates correspond, respectively, to the
$I$ and $\phi$-coordinates above, $h_i$ are arbitrary smooth functions with $h_i(0)=0$, and $U$
is a non-constant potential. One easily checks that $H$ is complete.
If, for instance, all $h_i$'s are linear, $H$ is quadratic, albeit
non-convex, in momenta.

Looking at the Hamiltonian flow of $H$ one readily sees that if
$p'(0)=0$, then $q(t)=q(0)$ for all $t$ and $p(t) = p(0) - \nabla
U(q(0)) \cdot t$. Thus choosing $q(0)$ to be a non-critical point of
the potential, we see that the increment of the momentum
$|p(t)-p(0)|$ grows with linear speed along a straight line (a {\it
superconductivity channel}).

Assume now that the potential $U$ attains a local maximum $\beta:= U(0)$ at the point $0 \in \T^k$.
Take $r < 1/2$ and assume that
$\alpha:= \max_{\{|q|=r\}} U < \beta\;.$

Take the Lagrangian tetragon in $T^*\T^k$ as in
Example~\ref{exam-prototype}, if $k=1$, and as in
Example~\ref{exam-scc-prep}, if $k > 1$. Stabilizing it by the zero-section $\T^m
\subset T^*\T^m$, we get a Lagrangian tetragon in $M=T^* \T^k\times T^* \T^m$. Denote
by
$\cF$, $\cC$, $\cL$, $\cH$ its floor,
ceiling and walls. Observe that $H$ $\gamma$-separates the
walls $\cL$ and $\cH$ with $\gamma: = \beta-\alpha$. As an immediate
consequence of Theorem~\ref{thm-inter-1} (if $k>1$) and of the
statement at the end of Example~\ref{exam-prototype} (if $k=1$) we
get the following result.

\medskip
\begin{cor}\label{cor-diffu}
For every complete Hamiltonian $G=H+F$, where $F \in C^\infty(M
\times \SP^1)$ and $\delta:= |\Delta(F;\cL,\cH)| < \gamma$, there
exists a Hamiltonian chord of $G$ from $\cF$ to
$\cC$ of time-length $\leq (R_1 - R_0)r/(\gamma-\delta)$.
The increment of $|p|$ along this chord equals $R_1 -R_0$.
\end{cor}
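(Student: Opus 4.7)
The plan is to reduce the Corollary to a direct application of the stable interlinking of the Lagrangian tetragon $\Lambda\subset M$, combined with the robustness estimate \eqref{eq-separ-robust}. The first step is to identify the walls, floor and ceiling of the stabilized tetragon in the coordinates $(p,q,p',q')$ and to check that $H$ from \eqref{eq-H-Neh} separates the walls with gap exactly $\gamma$. Since the stabilization is carried out by the zero-section $\T^m\subset T^*\T^m$, every point of $\Lambda$ satisfies $p'=0$, and therefore $H=U(q)$ on $\Lambda$. In the symplectization identifications used in Examples~\ref{exam-prototype} (for $k=1$) and \ref{exam-scc-prep} (for $k>1$), the high wall $\cH$ lies over $q=0$, while the low wall $\cL$ lies over $\{|q|=T\}$, where we set $T:=r$ (which satisfies $T<1/2$ by hypothesis). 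Hence $H|_{\cH}\equiv U(0)=\beta$ and $H|_{\cL}\leq \max_{|q|=r}U=\alpha$, giving
\[
\Delta(H;\cL,\cH)=\beta-\alpha=\gamma.
\]

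The second step is to turn on the perturbation. Applying \eqref{eq-separ-robust} to $G=H+F$ yields $\Delta(G;\cL,\cH)\geq \gamma-\delta>0$, so $G$ is a complete (time-periodic) Hamiltonian that $(\gamma-\delta)$-separates $\cL$ and $\cH$. The third step is to invoke the stable interlinking of $\Lambda$: for $k>1$ this is Theorem~\ref{thm-inter-1} applied with stabilizing manifold $\T^m$, and for $k=1$ it is the $m$-fold stabilization of the assertion recorded at the end of Example~\ref{exam-prototype}. In both cases the interlinking constant equals $\kappa=T(R_1-R_0)=r(R_1-R_0)$, which produces a Hamiltonian chord of $G$ from the floor $\cF$ to the ceiling $\cC$ of time-length at most
\[
\frac{\kappa}{\gamma-\delta}=\frac{r(R_1-R_0)}{\gamma-\delta}.
\]

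Finally I would read off the announced increment of $|p|$. Under the symplectization identifications in question the floor $\cF$ lies in $\{|p|=R_0,\; p'=0\}$ and the ceiling $\cC$ in $\{|p|=R_1,\; p'=0\}$, so the endpoints of the chord satisfy $|p(t_0)|=R_0$ and $|p(t_1)|=R_1$, and the increment equals $R_1-R_0$. The only non-routine part of the plan is the bookkeeping step that locates the walls, floor and ceiling of the stabilized tetragon on the correct level sets in $(p,q,p',q')$-coordinates; once this matching is fixed, the Corollary follows from stable interlinking and the robustness inequality \eqref{eq-separ-robust} with no further work.
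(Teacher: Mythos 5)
Your argument is correct and reproduces exactly the line of reasoning the paper outlines before the corollary: identify the stabilized tetragon's walls at $\{p'=0,\ q=0\}$ (high) and $\{p'=0,\ |q|=r\}$ (low), observe $H=U(q)$ on the tetragon (since $h_i(0)=0$), so $H$ $\gamma$-separates $\cL$ and $\cH$; then combine the robustness inequality \eqref{eq-separ-robust} with stable interlinking (Theorem~\ref{thm-inter-1} for $k>1$, the prototype statement for $k=1$) with $T=r$, and read off the endpoint radii of $\cF$, $\cC$ to get the increment $R_1-R_0$. This is the same decomposition the paper uses; you have merely spelled out the "immediate consequence" that the authors left implicit, including the correct matching $T=r<1/2$ required by Example~\ref{exam-scc-prep}.
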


\subsection{Unstable equilibrium}\label{subsec-ue}

The simplest model of an unstable equilibrium is provided by the
quadra\-tic Hamiltonian $H=\frac{1}{2}(|p|^2-|q|^2)$ on the standard
symplectic vector space $(\R^{2k},dp \wedge dq)$ (as before, $|\cdot
|$ denotes the Euclidean norm). Consider the Lagrangian tetragon
$\Lambda \subset \R^{2k}$ described in Example~\ref{exam-ue-prep}.
Denote by $\cF,\cC,\cL,\cH$ its floor, ceiling and walls. Take
any point of the form $z=u$, $u \in \R^k$, lying in the sphere
$\{|p|=R_0^{1/2}, q=0\} \subset \cH$. By definition, the point
$z'=e^{i\pi/4}z$ lies on the floor $\cF$. At the same time one
readily sees that $z'$ belongs to the unstable manifold $\{p=q\}$ of
the fixed point $0$ of the Hamiltonian flow of $H$. The trajectory
of $z'$ has the form $e^tz'$, and hence it eventually hits the
ceiling $\cC$. Thus we have found a chord of $H$ from $\cF$ to
$\cC$.

Observe also that $H$ $\gamma$-separates $\cL$ and $\cH$ with
$\gamma= R_0$. Therefore, by Theorem~\ref{thm-inter-2}, a chord
connecting $\cF$ and $\cC$ persists under sufficiently $C^0$-small
perturbations of $H$ on $\cL$ and $\cH$, yielding the following
corollary.

\medskip
\begin{cor}\label{cor-ue-1} For every complete Hamiltonian
$G=H+F$, where $F \in C^\infty(M \times \SP^1)$ and $\delta:=
|\Delta(F;\cL,\cH)| < R_0$, there is a Hamiltonian chord of $G$ from
$\cF$ to $\cC$ of time-length $\leq \pi(R_1-R_0)/(4(R_0-\delta))$.
The increment of $|(p,q)|$ along this chord equals
$R_1^{1/2}-R_0^{1/2}$.
\end{cor}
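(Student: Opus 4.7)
The plan is to deduce the corollary from Theorem~\ref{thm-inter-2} combined with the robustness estimate~\eqref{eq-separ-robust}, exactly in the spirit of the preceding discussion of the unperturbed $H$. By Theorem~\ref{thm-inter-2} applied with $m=0$, the Lagrangian tetragon $\Lambda$ of Example~\ref{exam-ue-prep} is $\kappa$-interlinked with $\kappa=\pi(R_1-R_0)/4$, so the remaining work is to (i) compute $\Delta(H;\cL,\cH)$, (ii) invoke robustness to control $\Delta(G;\cL,\cH)$, and (iii) read off the radial increment.

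For (i), I would make the walls explicit in $\C^k$ via the symplectization identification $(z,s)\mapsto\sqrt{s}\,z$ and the Reeb flow $\psi_t(z)=e^{2it}z$. At $T=\pi/4$ one has $\psi_T(L)=\{p=0,\,|q|=1\}$, so
\[
\cH=\{(\tilde p,0):R_0^{1/2}\le|\tilde p|\le R_1^{1/2}\},\qquad \cL=\{(0,\tilde q):R_0^{1/2}\le|\tilde q|\le R_1^{1/2}\},
\]
in agreement with Example~\ref{exam-ue-prep}. Plugging into $H=\tfrac12(|p|^2-|q|^2)$ gives $H|_\cH\subset[R_0/2,R_1/2]$ and $H|_\cL\subset[-R_1/2,-R_0/2]$, so $H$ $\gamma$-separates $\cL$ and $\cH$ with $\gamma=R_0$, and with the correct ``high over low'' sign.

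For (ii), the robustness inequality~\eqref{eq-separ-robust} yields $\Delta(G;\cL,\cH)\ge R_0-\delta>0$. Since $G$ is complete by hypothesis, the interlinking property provided by Theorem~\ref{thm-inter-2} produces a Hamiltonian chord of $G$ from $\cF$ to $\cC$ of time-length at most $\kappa/(R_0-\delta)=\pi(R_1-R_0)/(4(R_0-\delta))$. For (iii), under the same identification $(z,s)\mapsto\sqrt{s}\,z$ the floor and the ceiling lie in the Euclidean spheres $\{|z|=R_0^{1/2}\}$ and $\{|z|=R_1^{1/2}\}$ respectively, because $|\psi_t(z_0)|\equiv 1$ for $z_0\in L$. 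Hence along any chord from $\cF$ to $\cC$ the quantity $|(p,q)|$ moves from $R_0^{1/2}$ to $R_1^{1/2}$, giving the asserted increment.

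I do not anticipate a genuine obstacle here: once Theorem~\ref{thm-inter-2} is granted, the whole argument is a direct unpacking of the definitions. The only point requiring care is the sign convention at $T=\pi/4$: one must verify that the high wall $\cH$ in the sense of Definition~\ref{defin-Lagr-tetragon} is the one lying in the $p$-axis, where $H>0$, so that interlinking applies to $H$ itself and not to $-H$; otherwise the separation inequality would have the wrong sign and the theorem could not be invoked directly.
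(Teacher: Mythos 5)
Your proposal is correct and follows essentially the same route as the paper: observe that $H$ $\gamma$-separates $\cL$ and $\cH$ with $\gamma=R_0$ (larger on $\cH$, as required), apply Theorem~\ref{thm-inter-2} together with the robustness estimate~\eqref{eq-separ-robust}, and read off the radial increment from the fact that $\cF$ and $\cC$ lie on the spheres of radii $R_0^{1/2}$ and $R_1^{1/2}$. Your explicit identification of $\cH$ and $\cL$ in $\C^k$, and the resulting confirmation that the separation has the correct sign, is exactly the check the paper leaves implicit.
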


Here is another similar setting where Theorem~\ref{thm-inter-2} can be applied. Let $G:
\R^{2k} (p,q)\times \SP^1(t)\to \R$ be a complete Hamiltonian of the
form
\[
G(p,q,t) = |p|^2/2 + U(q,t).
\]
Assume that for some $0<R_0<R_1$
$$
\max_{\{R_0^{1/2} \leq |q| \leq R_1^{1/2}\}  \times \SP^1} U =: -\beta \leq 0
$$
and that $U (0,t)=0$ for all $t\in \SP^1$.
Then $G$ $\gamma$-separates $\cL$ and $\cH$ with
$\gamma = R_0/2 +\beta$. Thus, Theorem~\ref{thm-inter-2} yields the following corollary.

\begin{cor}\label{cor-mohnke-sphere-mechanical-hamilt}
There exists a Hamiltonian chord of $G$ from $\cF$ to $\cC$ of
time-length bounded from above by $\pi(R_1-R_0)/(2R_0+4\beta)$.
\end{cor}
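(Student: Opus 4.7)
The plan is to reduce Corollary~\ref{cor-mohnke-sphere-mechanical-hamilt} to a direct application of Theorem~\ref{thm-inter-2}. All that is needed is to compute the separation constant $\Delta(G;\cL,\cH)$ for $G$ on the walls of the Lagrangian tetragon $\Lambda \subset \C^k$ described in Example~\ref{exam-ue-prep}, and then multiply by the reciprocal of the interlinking constant supplied by Theorem~\ref{thm-inter-2}.

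Concretely, I would first read off from Example~\ref{exam-ue-prep} that the low wall $\cL$ lies in the shell $\{p=0,\ R_0^{1/2}\le |q|\le R_1^{1/2}\}$ and the high wall $\cH$ lies in the shell $\{q=0,\ R_0^{1/2}\le |p|\le R_1^{1/2}\}$. On $\cH$ the hypothesis $U(0,t)=0$ reduces $G$ to $|p|^2/2$, so
$$\min_{\cH\times\SP^1} G \;=\; R_0/2,$$
attained on the inner sphere $\{|p|=R_0^{1/2}\}$. On $\cL$ one has $G(0,q,t)=U(q,t)$, and the hypothesis on $U$ yields $\max_{\cL\times\SP^1} G\le -\beta$. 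Subtracting,
$$\gamma \;:=\; \Delta(G;\cL,\cH) \;\ge\; \frac{R_0}{2}+\beta \;>\;0,$$
so $G$ genuinely $\gamma$-separates $\cL$ from $\cH$.

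Now Theorem~\ref{thm-inter-2} asserts that $\Lambda$ is stably $\kappa$-interlinked with $\kappa=\pi(R_1-R_0)/4$. Taking its unstabilized ($m=0$) case and invoking Definition~\ref{defin-interlinking}, the complete time-periodic Hamiltonian $G$ must admit a chord from $\cF$ to $\cC$ of time-length at most
$$\frac{\kappa}{\gamma} \;\le\; \frac{\pi(R_1-R_0)/4}{R_0/2+\beta} \;=\; \frac{\pi(R_1-R_0)}{2R_0+4\beta},$$
which is the claimed bound. There is no real obstacle in this argument: the entire substantive content has already been packaged into Theorem~\ref{thm-inter-2}, and the only work left is the elementary evaluation of $G$ on each wall. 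The clean form of the estimate simply reflects the fact that the spherical shells singled out in the hypotheses of the corollary are precisely the projections to $\C^k$ of the walls $\cL$ and $\cH$ of $\Lambda$.
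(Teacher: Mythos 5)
Your argument is correct and coincides with the paper's: the paper simply records that $G$ $\gamma$-separates $\cL$ and $\cH$ with $\gamma = R_0/2 + \beta$ (which is exactly your wall-by-wall evaluation) and then reads off the time-length bound $\kappa/\gamma = \pi(R_1-R_0)/(2R_0+4\beta)$ from Theorem~\ref{thm-inter-2}. The only cosmetic difference is that you state $\max_{\cL\times\SP^1}G \le -\beta$ whereas the hypothesis gives equality, but this does not affect the conclusion.
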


This corollary can be viewed as a generalization of the
known result in the theory of the inverse Lagrange-Dirichlet problem
about the instability of the
equilibrium point of a mechanical system corresponding to a
(non-strict) local maximum of the potential (see \cite{Hagedorn,
Taliaferro}) -- such an instability follows immediately if the
potential $U$ above is taken to be time-independent and with a local
maximum $U(0)=0$ at $q=0$.

\subsection{A Poisson bracket invariant}
\label{sec-intro-pb4+}

It has been shown in \cite{BEP} that the existence of connecting
trajectories of Hamiltonian flows can be proved by means of a certain invariant involving
the Poisson bracket. In the present paper we refine this techniques
in order to establish interlinking for Lagrangian
tetragons.

Given a symplectic manifold $(M,\omega)$ and a quadruple of compact
sets $X_0,X_1,Y_0,Y_1\subset M$ with  $X_0\cap X_1 = Y_0\cap Y_1 =
\emptyset$, define a number
\[
pb^+_4 (X_0,X_1,Y_0,Y_1) := \inf \max_M\{F,G\}\;,
\]
where the infimum is taken over all compactly supported functions
$F,G: M\to\R$ such that $F|_{X_0}\leq 0$, $F|_{X_1} \geq 1$,
$G|_{Y_0}\leq 0$, $G|_{Y_1}\geq 1$. Let us explain the notation:
$pb$ stands for the ``Poisson bracket", $4$ for the number of sets
and $+$ for the fact that we are considering the maximum of the
Poisson bracket, instead of the uniform norm as it was done in
\cite{BEP}.

Write $\hX_i$, $\hY_i$ for the
$\SP^1$-stabilizations of $X_i$ and $Y_i$ in $M \times T^*\SP^1$.

Consider the set $\Upsilon$ (resp., $\Upsilon_{aut}$) of all $\kappa>0$ such that the pair $(Y_0,Y_1)$
$\kappa$-interlinks (resp., autonomously $\kappa$-interlinks) the pair $(X_0,X_1)$. Let
\[
\bkappa := \inf \Upsilon,\  \bkappa_{aut} :=\inf \Upsilon_{aut}.
\]
If $\Upsilon=\emptyset$ (resp., $\Upsilon_{aut}=\emptyset$),
set $\bkappa:=+\infty$ (resp., $\bkappa_{aut}:=+\infty$). One easily checks that $\bkappa_{aut}\leq \bkappa$ and $\Upsilon=[\bkappa,+\infty)\subset \Upsilon_{aut}=[\bkappa_{aut},+\infty)$.

\medskip
\noindent
\begin{thm}\label{thm-pb-new}
\[
1/pb_4^+ (X_0,X_1,Y_0,Y_1) = \bkappa_{aut} \leq \bkappa \leq 1/pb^+_4 (\hX_0,\hX_1,\hY_0, \hY_1).
\]
In particular, if
$pb^+_4 (\hX_0,\hX_1,\hY_0, \hY_1) =:1/\kappa >0$,
then the pair $(Y_0,Y_1)$
$\kappa$-interlinks the pair $(X_0,X_1)$,
and if $pb^+_4 (X_0,X_1,Y_0,Y_1) =:1/\kappa >0$,
then the pair $(Y_0,Y_1)$
autonomously $\kappa$-interlinks the pair $(X_0,X_1)$.

\end{thm}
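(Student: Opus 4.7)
The statement bundles three inequalities. The middle one, $\bkappa_{aut}\le\bkappa$, reduces to the observation that every autonomous Hamiltonian is trivially time-periodic, so $\Upsilon\subseteq\Upsilon_{aut}$. My plan is to prove the equality $pb_4^+(X_0,X_1,Y_0,Y_1)=1/\bkappa_{aut}$ first, and then deduce the remaining inequality from it via symplectic suspension. For the easy direction $pb_4^+\ge 1/\bkappa_{aut}$, take any compactly supported admissible pair $(F,G)$ in the definition of $pb_4^+$; compact support plus autonomy make $G$ complete with $\Delta$-gap $\ge 1$, so for every $\kappa>\bkappa_{aut}$ the autonomous $\kappa$-interlinking produces a $G$-chord $\gamma:[0,T]\to M$ from $X_0$ to $X_1$ with $T\le\kappa$. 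Integrating $\tfrac{d}{dt}F(\gamma(t))=\{F,G\}(\gamma(t))$ yields $1\le T\cdot\max_M\{F,G\}\le\kappa\cdot\max_M\{F,G\}$. Letting $\kappa\searrow\bkappa_{aut}$ and then taking the infimum over $(F,G)$ closes this half.

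\smallskip
The reverse inequality $\bkappa_{aut}\le 1/p$ with $p:=pb_4^+(X_0,X_1,Y_0,Y_1)$ is the heart of the argument. Fix $\kappa>1/p$ and any autonomous complete $G$ that $\Delta$-separates $Y_0,Y_1$. Rescaling and shifting $G$ reduces to the case $\Delta=1$, $G|_{Y_0}\le 0$, $G|_{Y_1}\ge 1$, and the goal becomes producing a $G$-chord from $X_0$ to $X_1$ of length $\le\kappa$. Argue by contradiction and assume no such chord exists. Pick $\kappa'\in(1/p,\kappa)$; compactness and flow continuity yield open neighborhoods $X_0\subset U_0$, $X_1\subset U_1$ with $\phi_t^G(\overline{U_0})\cap\overline{U_1}=\emptyset$ for all $t\in[0,\kappa']$. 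Introduce the reach-time function $\tau(x):=\inf\{t\ge 0:\phi_t^G(x)\in\overline{U_1}\}$, with $\tau=+\infty$ if the forward orbit never meets $\overline{U_1}$. Then $\tau=0$ on $X_1$, $\tau>\kappa'$ on $X_0$, and $\tau$ decreases at rate $1$ along $G$-orbits while finite. Choose a smooth $\alpha:\R\to[0,1]$ with $\alpha(s)=0$ for $s\le 0$, $\alpha(1)=1$, and $0\le\alpha'\le 1+\epsilon$; set $F:=\alpha(1-\tau/\kappa')$. By construction $F|_{X_0}\le 0$, $F|_{X_1}\ge 1$, $\operatorname{supp}(F)$ lies inside the compact set $\bigcup_{0\le t\le\kappa'}\phi_{-t}^G(\overline{U_1})$, and $X_GF=\alpha'(1-\tau/\kappa')/\kappa'\le(1+\epsilon)/\kappa'$ pointwise. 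A standard mollification of $\tau$ in flow-box coordinates over the compact backward reach renders $F$ smooth while preserving the slope estimate up to a further $\epsilon$. To match the compact-support convention on the $G$-slot, replace $G$ by $G':=\eta G$ with $\eta$ a cutoff equal to $1$ on a compact neighborhood of $\operatorname{supp}(F)\cup Y_0\cup Y_1$: then $G'$ is compactly supported, $G'|_{Y_i}=G|_{Y_i}$, and $\{F,G'\}=\{F,G\}$ on $\operatorname{supp}(F)$ while vanishing elsewhere. Choosing $\epsilon$ so that $(1+\epsilon)/\kappa'<p$ produces an admissible pair $(F,G')$ with $\max_M\{F,G'\}<p$, contradicting the defining infimum of $p$.

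\smallskip
Finally, $\bkappa\le 1/pb_4^+(\hX_0,\hX_1,\hY_0,\hY_1)$ follows from the autonomous equality by the symplectic suspension trick. Given a time-periodic complete $G:M\times\SP^1\to\R$ that $\Delta$-separates $Y_0,Y_1$, define $\hG:M\times T^*\SP^1\to\R$ by $\hG(x,q,p):=p+G(x,q)$ in canonical coordinates $(q,p)\in T^*\SP^1$. Hamilton's equations for $\hG$ read $\dot q=1$, $\dot x=X_{G_q}(x)$, $\dot p=-\partial_qG$, so $\hG$ is autonomous, complete whenever $G$ is, and $\Delta$-separates $\hY_0,\hY_1$ (which sit at $p=0$, where $\hG=G$). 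Applying the autonomous equality just proved in $M\times T^*\SP^1$ to $\hG$ yields a chord of $\hG$ from $\hX_0$ to $\hX_1$ of length at most $1/(\Delta\cdot pb_4^+(\hX_0,\hX_1,\hY_0,\hY_1))$, whose projection to $M$ is a chord of the time-periodic flow of $G$ of the same length. The main obstacle throughout is the smoothing of $\tau$: the naive reach-time function is only upper semicontinuous and can jump at orbits that narrowly miss $\overline{U_1}$, so the mollification must be performed carefully in flow-box coordinates over the compact backward reach to preserve $X_G\tau\ge -1-\eta$.
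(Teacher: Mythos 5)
Your proof matches the paper's in its skeleton: the easy inequality $pb_4^+ \geq 1/\bkappa_{aut}$ (integrating $\{F,G\}$ along the interlinking chord), the middle inequality $\bkappa_{aut}\leq\bkappa$ (autonomous Hamiltonians are a subclass of time-periodic ones), and the suspension argument $\hG(x,r,\theta)=G(x,\theta)+r$ on $M\times T^*\SP^1$ reducing the time-periodic case to the autonomous one. Those parts are correct and essentially identical to the paper.

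The hard direction $\bkappa_{aut}\leq 1/pb_4^+$ is where your route diverges, and where there is a genuine gap. The paper does \emph{not} attempt to construct a witnessing $F$; it cuts $G$ off to a compactly supported $G'$ and then invokes Fathi's theorem (Theorem~\ref{thm-fathi}): $T(X_0,X_1;v)=1/L_{max}(X_0,X_1;v)$, which, applied to $v=\sgrad G'$, directly produces a chord of time-length $\leq \kappa_{aut}$. You instead try to \emph{reprove} the hard half of Fathi's theorem by building $F=\alpha(1-\tau/\kappa')$ out of the first-hitting time $\tau$ of $\overline{U_1}$ and then claiming ``a standard mollification in flow-box coordinates renders $F$ smooth while preserving the slope estimate.'' That step is exactly where the substance of Fathi's theorem lies, and it is not standard. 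The function $\tau$ is only lower semicontinuous (not upper, as you write): it jumps to $+\infty$ on orbits that graze $\partial\overline{U_1}$ tangentially, and it jumps upward along the flow at exit points of $\overline{U_1}$ on recurrent orbits, so $F$ is a genuinely discontinuous, merely Borel function with $O(1)$ jumps. Mollifying it inside a single flow box transverse to the flow does preserve the one-sided flow-derivative bound, but gluing the per-box mollifications with a partition of unity $\{\chi_i\}$ produces an error term $\sum_i (L_v\chi_i)\bigl(F_\delta^{(i)}-F_\delta^{(j)}\bigr)$, and near the discontinuity set the box-to-box discrepancy $|F_\delta^{(i)}-F_\delta^{(j)}|$ does not tend to $0$ with $\delta$ (the two boxes average $F$ across the jump in incompatible transverse directions), while $|L_v\chi_i|$ blows up as the boxes shrink. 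There is no canonical global transversal along which to mollify consistently, and a flow-average does not help either since it does not smooth transversally. This is precisely the difficulty the paper flags in Remark~\ref{rem-fathi-thm}, where it stresses that Fathi's proof requires ``ingenious arguments from the general theory of metric spaces'' beyond the averaging argument of \cite{BEP}. You should either cite Theorem~\ref{thm-fathi} as a black box at this point, as the paper does, or actually carry out the smoothing — which would amount to an independent proof of Fathi's theorem and cannot be dismissed as routine.
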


\medskip
\noindent The proof follows the lines of \cite{BEP} with the
following amendments.

\medskip
\noindent 1. We adjust the argument of \cite{BEP} to complete
but not necessarily compactly supported Hamiltonians  appearing in
the definition of interlinking.

\medskip
\noindent
2. The results of \cite{BEP} do not say whether the
Hamiltonian chord connecting $X_0$ and $X_1$ goes from $X_0$ to
$X_1$ or from $X_1$ to $X_0$. This is why we introduce a refined version of the
Poisson bracket invariant and use a recent
theorem of A.Fathi (see Theorem~\ref{thm-fathi}) to detect
Hamiltonian chords going between two sets in a given direction.

\subsection{Interlinking and exact Lagran\-gians}
\label{subsec-interlink-tetragons-exact-Lagr}

\medskip

Let us discuss our method of proof of the interlinking of a Lagrangian tetra\-gon. For the sake of transparency, let us focus on the simplest case of autonomous interlinking of a Lagrangian tetragon $\Lambda=\cF\cup \cC\cup \cL\cup \cH$ in the symplectization
$\Sigma \times \R_+$ of a contact manifold $(\Sigma,\xi)$. By Theorem~\ref{thm-pb-new}, in order to establish the interlinking (respectively, the autonomous
interlinking) for a Lagrangian tetragon $\Lambda$, it suffices to show
the positivity of $\hpb^+_4 (\cF, \cC, \cL, \cH)$ (respectively, of
$pb^+_4 (\cF, \cC, \cL, \cH)$). We will prove this positivity as follows.

Assume that $\Lambda$ was constructed using a Legendrian submanifold $L\subset (\Sigma,\xi)$, a contact form $\lambda_0$ on $\Sigma$ and
real parameters $0< R_0 < R_1$ and $T>0$, see (C1)-(C3) in
Section~\ref{subsec-intro-tetragons} above\footnote{The manifold $K$ in (C4) is assumed to be the point.}.

The Lagrangian tetragon $\Lambda$ is a singular Lagrangian submanifold with corners.
One can smoothen its corners and get a smooth Lagrangian submanifold
$\Lambda_\varepsilon$ in $(\Sigma \times \R_+, d(s\lambda_0))$ diffeomorphic to $L\times
\SP^1$. One easily checks that the
Lagrangian isotopy class of $\Lambda_\varepsilon$ in $(\Sigma \times \R_+, d(s\lambda_0))$
depends only on the pair $(\Sigma, L)$ and
not on $\lambda_0$, $R_0$, $R_1$, $T>0$ and $\varepsilon$.

We will say that the pair $(\Sigma, L)$ is {\it exact}, if the above-mentioned
Lagrangian isotopy class of $\Lambda_\varepsilon$ in $(\Sigma \times \R_+, d(s\lambda_0))$
contains an
exact Lagrangian
submanifold -- that is, a Lagrangian submanifold $\Lambda'$ such that $(s\lambda_0)|_{\Lambda'}$ is an exact form. Otherwise $(\Sigma,L)$ will be called {\it non-exact}.

Similarly, we will say that the pair $(\Sigma, L)$ is {\it stably non-exact}, if for any $m\in\Z_{\geq 0}$
the Lagrangian isotopy class of $\Lambda_\varepsilon\times \T^m$ in the exact symplectic manifold
$(\Sigma\times \R_+\times T^* \T^m, d(s\lambda_0 + pdq))$ does not contain an exact Lagrangian submanifold.
(Here $pdq$ is the canonical 1-form on $T^* \T^m$). Clearly, stable non-exactness implies non-exactness.

\begin{thm}
\label{thm-non-exact-pairs-yield-interlinked-tetr-in-symplectization-1}
Let $(\Sigma,L)$ be a pair as above and let
$\Lambda = \cF\cup\cC\cup\cL\cup\cH\subset\Sigma\times\R_+$ be a
Lagrangian tetragon constructed using $L$, a contact form on $(\Sigma,\xi)$ and some parameters $R_0,R_1,T$.

\medskip
\noindent
A. If the pair $(\Sigma,L)$ is non-exact, then
$$pb_4^+ ( \cF, \cC, \cL, \cH)= \bigg( (R_1-R_0)T \bigg)^{-1}\;,$$
and thus, by Theorem~\ref{thm-pb-new},
$\Lambda$ is autonomously $(R_1-R_0)T$-interlinked in $(\Sigma \times \R_+, d(s\lambda_0))$.

\medskip
\noindent
B. If the pair $(\Sigma,L)$ is stably non-exact, then
$$
\hpb_4^+ ( \cF, \cC, \cL, \cH)= \bigg( (R_1-R_0)T \bigg)^{-1}\;,
$$
and thus, by Theorem~\ref{thm-pb-new},
$\Lambda$ is $(R_1-R_0)T$-interlinked in $(\Sigma \times \R_+, d(s\lambda_0))$.
\end{thm}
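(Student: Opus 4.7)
By Theorem~\ref{thm-pb-new}, parts~A and~B reduce, respectively, to the equalities $pb_4^+(\cF,\cC,\cL,\cH)=((R_1-R_0)T)^{-1}$ and $\hpb_4^+(\cF,\cC,\cL,\cH)=((R_1-R_0)T)^{-1}$. Each splits into an upper and a lower bound; the upper bound is geometric and identical in both parts, while the lower bound is where the (stable) non-exactness of $(\Sigma,L)$ enters. For the upper bound I would exhibit an explicit separating pair $(F,G)$ supported in a tubular neighbourhood of the tetragon: take smooth cutoffs $f\colon\R_+\to[0,1]$ with $f\equiv 0$ on $(-\infty,R_0]$, $f\equiv 1$ on $[R_1,\infty)$ and $\|f'\|_\infty\le(R_1-R_0)^{-1}+\eta$, and $g\colon\R\to[0,1]$ with $g\equiv 1$ near $0$, $g\equiv 0$ near $T$ and $\|g'\|_\infty\le T^{-1}+\eta$; then set $F=f(s)$ and $G(\psi_t(x_0),s,q)=g(t)$ along Reeb orbits issued from $L$, extended by compactly supported cutoffs constant in the $T^*K$-direction. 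Since the Hamiltonian vector field of $F$ in the symplectization $(\Sigma\times\R_+,d(s\lambda_0))$ is proportional to the Reeb vector field of $\lambda_0$, a direct computation yields $\{F,G\}=\pm f'(s)g'(t)$, so $\max_M\{F,G\}\le ((R_1-R_0)^{-1}+\eta)(T^{-1}+\eta)$; letting $\eta\to 0$ gives the upper bound in both parts (the construction is insensitive to $K$ because $F$ and $G$ are constant along the $T^*K$-factor).

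\textbf{Lower bound via non-exactness.} For any admissible pair $(F,G)$, set $c:=\max_M\{F,G\}$ and suppose for contradiction that $c<((R_1-R_0)T)^{-1}$. The plan is to build, from the flows of $F$ and $G$, a Lagrangian isotopy of $(\Sigma\times\R_+,d(s\lambda_0))$ starting at the smoothed tetragon $\Lambda_\varepsilon$ and ending at a Lagrangian $\Lambda'$ on which $s\lambda_0$ restricts to an exact form, in contradiction with the non-exactness hypothesis on $(\Sigma,L)$. The mechanism is the BEP-style ``approximate commuting'' estimate: the Hofer energy of the commutator $\phi_F^\alpha\phi_G^\beta\phi_F^{-\alpha}\phi_G^{-\beta}$ is bounded by $c\,|\alpha\beta|$, so the two-parameter family $(\alpha,\beta)\mapsto \phi_F^\alpha\phi_G^\beta$ descends, after a suitable reparametrisation, to a genuine Lagrangian isotopy of $\Lambda_\varepsilon$. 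On $\Lambda_\varepsilon$ the period of $s\lambda_0$ around the tetragon loop (floor $\to$ low wall $\to$ ceiling $\to$ high wall $\to$ floor) equals $\pm(R_1-R_0)T$; the isotopy shifts this period by an amount bounded by $c(R_1-R_0)T$, and the strict inequality $c<((R_1-R_0)T)^{-1}$ leaves just enough slack to refine the construction so that the shifted period is exactly zero, producing the forbidden exact representative.

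\textbf{Stabilisation and main obstacle.} Part~B is handled by running the same argument on the stabilised tetragon $\Lambda\times\T^m\subset \Sigma\times\R_+\times T^*\T^m$ equipped with $d(s\lambda_0+p\,dq)$: stable non-exactness of $(\Sigma,L)$ is precisely the hypothesis ruling out an exact representative in each such stabilisation, and the upper bound from the first paragraph is uniform in $m$. Specialising to $K=\SP^1$ as the zero section of $T^*\SP^1$ recovers $\hpb_4^+$, completing part~B. The hardest step throughout is the passage from the pointwise bound $|\{F,G\}|\le c$ to a genuine exact Lagrangian representative of $[\Lambda_\varepsilon]$: one must carefully book-keep the symplectic area acquired along each of the four sides of the tetragon loop under the two-parameter Hamiltonian deformation, and verify that the smoothing contributions concentrated near the four corners become negligible as $\varepsilon\to 0$. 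The uniformity in the stabilising torus factor required for part~B is a secondary technical point, handled by keeping the construction constant along that factor.
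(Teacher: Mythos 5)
Your high-level split (upper bound by an explicit separating pair, lower bound by driving the smoothed tetragon to an exact Lagrangian and contradicting (stable) non-exactness) is the right one, and the reduction of part B to part A by the $T^*\T^m$-stabilization is also what the paper does. But the mechanism you invoke for the lower bound is not the paper's and, as described, it does not close.

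\textbf{The lower bound.} You propose a BEP-type commutator/Hofer-energy estimate for $\phi_F^\alpha\phi_G^\beta\phi_F^{-\alpha}\phi_G^{-\beta}$ and then say the strict inequality ``leaves just enough slack to refine the construction so that the shifted period is exactly zero.'' Two problems. First, the commutator energy bound is in terms of $\|\{F,G\}\|$, i.e.\ the two-sided sup norm; it controls $pb_4$, not $pb_4^+$, and here only the one-sided bound $\max_M\{F,G\}<c$ is available. Second, the passage from ``shifted by at most $c(R_1-R_0)T$'' to ``equals zero'' is exactly the nontrivial step and you leave it unexplained. The paper uses a different and sharper device (Proposition~\ref{prop-FdG-Lambda}): deform the symplectic form to $\omega_\tau=\omega+\tau\,dF\wedge dG$. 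Since $\omega_\tau^n=(1-\tau\{F,G\})\omega^n$, $\omega_\tau$ is symplectic precisely for $\tau\in[0,1/\max_M\{F,G\})$, so only the one-sided bound enters. At $\tau=a:=1/c$ the smoothed tetragon is exact for the primitive $\lambda_a=\lambda+aFdG$, because the key identity $[FdG|_{\Lambda_\varepsilon}]=-c[\lambda|_{\Lambda_\varepsilon}]$ makes the periods cancel \emph{identically}, not approximately; Moser's trick and a compactly supported locally Hamiltonian flow then carry $\Lambda_\varepsilon$ to a Lagrangian that is exact for the original $\lambda$, contradicting non-exactness. Your ``slack'' heuristic does not reproduce this exact cancellation.

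\textbf{The cohomological bookkeeping.} You only track the period around the ``tetragon loop.'' But $H_1(\Lambda_\varepsilon;\R)\cong H_1(L;\R)\oplus\R$, and when $\dim L\ge 1$ the classes in $H_1(L;\R)$ must also be checked: the paper verifies that both $\lambda$ and $FdG$ vanish on them (the former because $\lambda_0|_L=0$, the latter because $FdG\equiv 0$ on a neighborhood of $\cH$, where such cycles can be pushed). Without this the proportionality $[FdG|_{\Lambda_\varepsilon}]=-c[\lambda|_{\Lambda_\varepsilon}]$ — hence exactness at $\tau=1/c$ — is not established. Relatedly, you need $F$ and $G$ to be locally constant near the four sides so that $dF\wedge dG$ vanishes on a neighborhood of $\Lambda_\varepsilon$; otherwise $FdG|_{\Lambda_\varepsilon}$ is not even closed and the deformed form need not keep $\Lambda_\varepsilon$ Lagrangian. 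You do not normalize $(F,G)$ this way.

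\textbf{The upper bound.} Your direct Poisson-bracket computation with $F=f(s)$, $G=g(t)$ is morally right but the cutoffs are not innocent: $G$ must be compactly supported in $M=\Sigma\times\R_+$, so it must be cut off both in the Reeb-time parameter $t$ (where $g'$ changes sign) and in $s$ (and, if $\Sigma$ is non-compact, in the $\Sigma$ directions transverse to the Reeb box), and one must make sure the regions where $f'<0$ do not meet the regions where $L_vG>0$. This can be arranged, but you do not do it. The paper sidesteps all of this by a cleaner dynamical argument: take the autonomous Hamiltonian $u(s)$, observe it has no chords from $\cH$ to $\cL$ of time-length below $(R_1-R_0)^{-1}T-\epsilon$ while $1$-separating $\cF$ and $\cC$, and then invoke Theorem~\ref{thm-pb-new} together with the anti-symmetry~\eqref{eqn-pb4+-anti-symmetry} of $pb_4^\pm$ to get the upper bound on $pb_4^+(\cF,\cC,\cL,\cH)$.

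In short: the overall plan matches the paper, and your reduction and stabilization remarks are correct, but the lower-bound mechanism you propose is both the wrong tool for a one-sided Poisson bracket invariant and incomplete on the crucial exact-cancellation and $H_1(L)$-bookkeeping steps; the paper's Moser-deformation argument (Proposition~\ref{prop-FdG-Lambda} inside the proof of Theorem~\ref{thm-pb4-Mohnke}) is what actually makes the proof go through.
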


\medskip
\noindent We refer to Section~\ref{subsec-tetragons-main-result} for the proof and a more general version of this result. Theorem~\ref{thm-non-exact-pairs-yield-interlinked-tetr-in-symplectization-1}
gives rise to the following question.

\medskip
\noindent
\begin{question}\label{question-tetragon-interlinked-in-symplectization-itself}
Do exact pairs exist?
\end{question}

\medskip
\noindent We shall show, by using methods of symplectic topology,
that the answer is negative provided $(\Sigma, \xi)$ is the ideal contact boundary
 of a Liouville manifold (see
Section~\ref{subsec-no-exact-Lagr-submfds-pfs} below for the
statement of the result and Remarks~\ref{rem-weakly-exact-filling}
and \ref{rem-noncompact-Sigma} for
its generalization to certain non-compact contact manifolds). Note
that there do exist contact manifolds whose symplectizations contain
{\it some} exact closed Lagrangian submanifolds (see
\cite{Murphy-exact-Lagr}), but we do not know whether such a Lagrangian submanifold can be constructed as a smoothened
Lagrangian tetragon.

\medskip
\noindent
\begin{rem}\label{rem-Reeb}{\rm
Incidentally, Theorem~\ref{thm-non-exact-pairs-yield-interlinked-tetr-in-symplectization-1}
yields the following result in Reeb dynamics. As above, let $\psi_t: \Sigma\to\Sigma$ be the Reeb flow of the contact form $\lambda_0$ on $\Sigma$.
Let $(\Sigma,L)$ be a
non-exact pair and assume that,
as in the assumption (C2) in the construction of a Lagrangian tetragon,
$\psi_t (L)$ is well-defined and disjoint from $L$ for all $t \in (0,T]$ for some $T>0$. Put $L' := \psi_T(L)$. Let
$\lambda$ be an arbitrary contact form on $\Sigma$ that defines the same co-orientation of $\xi$ as $\lambda_0$ and has a complete Reeb flow.

We claim that the Reeb flow of $\lambda$ has a chord from $L$ to $L'$. Moreover,
the time-length of this chord does not
\textcolor{black}{exceed}\footnote{\textcolor{black}{The published version of the paper gives a wrong upper bound on the time-length. We correct this mistake here.}}\textcolor{black}{ $T\max_Y (\lambda/\lambda_0)$} with $Y := \bigcup_{t \in [0,T]} \psi_t(L)$.

Indeed, look at the Lagrangian tetragon $\Lambda = \cF \cup \cC \cup \cL \cup \cH$
associated to $L$, $\lambda_0$, $T$ and parameters $R=:R_1 >1=:R_0 >0$. Combining the anti-symmetry of the Poisson bracket invariant (see \eqref{eqn-pb4+-anti-symmetry} below) with
Theorem~\ref{thm-non-exact-pairs-yield-interlinked-tetr-in-symplectization-1} we get that
$$pb_4^+(\cH,\cL,\cF,\cC) = ((R-1)T)^{-1},$$ and hence the pair $(\cF,\cC)$ autonomously $\kappa$-interlinks
the pair of the walls $(\cH,\cL)$ in $(\Sigma \times \R_+, d(s\lambda_0))$ with  $\kappa:= (R-1)T$.
\textcolor{black}{The Reeb flow of $\lambda$ on $\Sigma$ lifts to a Hamiltonian flow on $(\Sigma\times \R_+, d(s\lambda_0))$ equivariant with respect to the multiplicative $\R_+$-action on $\Sigma\times \R_+$ and generated by the Hamiltonian $H:=s(\lambda_0/\lambda)$.}
Choose $R$ large enough and note that \textcolor{black}{$H$} $\Delta$-separates $\cF$ and $\cC$ with \textcolor{black}{ $\Delta:= R \min_Y (\lambda_0/\lambda) - \max_Y (\lambda_0/\lambda)$}. Hence, for such $R$,
the Reeb flow of $\lambda$ admits a chord of time-length $\leq \kappa/\Delta$ from
$L$ to $L'$. Letting $R \to +\infty$, we get \textcolor{black}{that the Reeb flow of $\lambda$ has a chord from $L$ to $L'$ of time-length $\leq T/\min_Y (\lambda_0/\lambda) =  T\max_Y (\lambda/\lambda_0)$, which proves the claim}. Let us mention that Legendrian contact homology
should be a more adequate and powerful technique for detecting Reeb chords connecting $L$ and $L'$, see e.g. \cite{Ekholm,Alves}.  }
\end{rem}

\medskip
\noindent
{\sc Organization of the paper:}
In Section~\ref{sec-prelim} we recall the necessary preliminaries. In Section~\ref{sec-existence-of-chords-for-smooth-vect-fields-Fathi} we formulate a recent result of A.Fathi on
the existence of chords for smooth vector fields which is crucial for our studies. In Section~\ref{sec-pb4-generalities} we discuss the Poisson bracket invariants and their relation to the existence of Hamiltonian chords. Section~\ref{sec-mohnke} is the central section of the paper -- in this section we prove the interlinking of
Lagrangian tetragons.

\section{Preliminaries}
\label{sec-prelim}

Let $(M^{2n},\omega)$ be a connected (not necessarily closed)
symplectic manifold.

Given an open set $U\subset M$, set $C^\infty_c (U)$ to be the space
of compactly supported smooth functions on $M$.

Further on we always identify $\R/\Z=\SP^1$. Given a Hamiltonian $G:
M\times \SP^1= M\times \R/\Z\to\R$, we denote $G_t := G(\cdot, t)$ and
say that $G$ is {\it compactly supported}, if $\supp G :=
\bigcup_{t\in \SP^1} \supp G_t$ is a compact subset of $M$. For a
bounded $G\in C^\infty (M)$ denote
\[
||G|| := \sup_M |G|.
\]

For $G\in C^\infty (M)$ define a vector field $\sgrad G$ by $i_{\sgrad G} \omega = -dG$.
Given $F,G\in C^\infty (M)$, define the Poisson bracket $\{F,G\}$ by
$$\{F,G\} := \omega(\sgrad G,\sgrad F) = dF (\sgrad G) = - dG (\sgrad F) =$$
$$= L_{\sgrad G} F = - L_{\sgrad F} G.$$

The {\it Hamiltonian flow}, or just a {\it flow}, of $G: M\times \SP^1\to\R$ is, by
definition, the flow of the (time-dependent) vector field $\sgrad
G_t$.

A subset of $(M,\omega)$ is called {\it displaceable}, if it can be completely displaced
from its closure by the flow of a compactly supported
(time-dependent) Hamiltonian.

We say that a (possibly open) symplectic manifold is of {\it bounded
geometry at infinity}, if it is geometrically bounded in the sense of
\cite{ALP} or convex at infinity in the sense of \cite{EG}. Let us
note that for any smooth manifold $N$ the cotangent bundle $T^* N$,
equipped with the standard symplectic structure, is of bounded
geometry at infinity.

A symplectic manifold $(M,\omega)$ is called {\it exact}, if $\omega$ is exact.
A Lagrangian submanifold $L$ of an exact symplectic manifold $(M,d\lambda)$
is called {\it exact} if the cohomology class $[\lambda|_L]\in H^1 (L;\R)$ is zero.

\section{Connecting trajecto\-ries of smooth vec\-tor fields}
\label{sec-existence-of-chords-for-smooth-vect-fields-Fathi}

In this section let $M$ be any smooth manifold, $v$ a complete smooth time-independent
vector field on $M$ and $X_0,X_1 \subset M$ disjoint compact subsets of $M$.
Denote by $T(X_0,X_1;v)$ the infimum of the time-lengths of the chords of $v$ from $X_0$ to $X_1$ (if there is no such chord, set $T(X_0,X_1;v):=+\infty$).
Define
\[
L_{max} (X_0,X_1;v) := \inf_F \max_M L_v F,
\]
where the infimum is taken over all smooth compactly supported functions $F$ on $M$ such that $F|_{X_0}\leq 0$, $F|_{X_1}\geq 1$.

A basic calculus argument shows that
\[
T(X_0,X_1;v) \geq 1/L_{max} (X_0,X_1;v).
\]

The following theorem has been proved by A.Fathi.

\begin{thm}[A.Fathi, \cite{Fathi}]
\label{thm-fathi}
$T(X_0,X_1;v) = 1/L_{max} (X_0,X_1;v)$.

In particular, together with the compactness of $X_0$ this implies that
if $L_{max} (X_0,X_1;v) >0$
there exists a chord of $v$ from $X_0$ to $X_1$
of time-length $1/L_{max} (X_0,X_1;v)$.
\end{thm}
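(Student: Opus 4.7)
I would prove the equality $T(X_0,X_1;v) = 1/L_{max}(X_0,X_1;v)$ by establishing each inequality separately, and then extract an attained chord by compactness when $L_{max}>0$. The inequality $T \geq 1/L_{max}$ is the elementary bound already recalled in the text: for any admissible $F$ and any chord $\gamma\colon [0,\tau] \to M$ from $X_0$ to $X_1$, the fundamental theorem of calculus gives $1 \leq F(\gamma(\tau)) - F(\gamma(0)) = \int_0^\tau (L_v F)(\gamma(t))\,dt \leq \tau \cdot \max_M L_v F$; taking the infimum over $F$ and over chords yields $T\cdot L_{max}\geq 1$.

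For the reverse inequality, fix $\tau < T(X_0,X_1;v)$ and $\varepsilon>0$; the goal is to produce an admissible $F \in C^\infty_c(M)$ with $\max_M L_v F \leq 1/\tau + \varepsilon$, which will give $L_{max} \leq 1/\tau + \varepsilon$ and, upon letting $\tau \nearrow T$ and $\varepsilon \to 0$, the bound $L_{max} \leq 1/T$. The key object is the arrival-time function
\[
f(x) := \inf\{t \geq 0 : \phi_{-t}(x) \in X_0\} \in [0,+\infty],
\]
where $\phi_t$ denotes the flow of $v$. Since the sublevel sets $\{f \leq t\} = \bigcup_{s \in [0,t]} \phi_s(X_0)$ are compact, $f$ is lower semicontinuous with $f|_{X_0}=0$ and $f|_{X_1}>\tau$ (by the hypothesis $T>\tau$). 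Along any trajectory, $f(\phi_s(x)) \leq f(x)+s$, so the bounded truncation $g:=\min(f,\tau)$ satisfies $L_v g \leq 1$ in the viscosity sense. Composing with a smooth nondecreasing template $\chi\colon \R \to [0,1]$ that vanishes on $(-\infty,\delta]$, equals $1$ on $[\tau-\delta,\infty)$, and has slope at most $1/\tau+\varepsilon/3$ produces a candidate $F_0 := \chi \circ g$ that meets the required boundary conditions and a formal Lie-derivative bound of $1/\tau+\varepsilon/3$.

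The main obstacle is making $F_0$ genuinely smooth and compactly supported while preserving the bound. For smoothness, I would first regularize $g$ by the flow average $g_\sigma(x) := \sigma^{-1}\int_0^\sigma g(\phi_t(x))\,dt$, which is locally Lipschitz on $\{f<\infty\}$ and inherits $L_v g_\sigma \leq 1 + O(\sigma)$ by direct differentiation. Then I would mollify spatially inside flow-box charts adapted to $v$, so that the mollifier acts transversely to the flow and the bound on $L_v$ is preserved up to an $O(\varepsilon)$ error on a neighborhood of $X_0 \cup \bigcup_{t\in[0,\tau]}\phi_t(X_0)$. For compact support, multiply by a smooth cutoff $\rho$ equal to $1$ on a large compact neighborhood of $X_0 \cup X_1 \cup \bigcup_{t\in[0,\tau]}\phi_t(X_0)$; the cutoff can be chosen so that its decay region lies in $\{g=\tau\}$, where $F_0 \equiv 1$, so $L_v(\rho F_0) = L_v \rho$ there and can be made arbitrarily small by letting $\rho$ decay sufficiently slowly along trajectories.

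Finally, if $L_{max}>0$, the first inequality forces $T=1/L_{max}<\infty$, so there is a sequence of chords $\gamma_n\colon[0,\tau_n]\to M$ from $X_0$ to $X_1$ with $\tau_n\to T$. Their initial points $\gamma_n(0)$ lie in the compact set $X_0$, and a subsequence converges to some $x_\infty\in X_0$. By continuity of the (complete) flow, $\phi_T(x_\infty) = \lim_n \phi_{\tau_n}(\gamma_n(0)) = \lim_n \gamma_n(\tau_n) \in X_1$ by closedness of $X_1$, producing a chord of time-length exactly $T=1/L_{max}$.
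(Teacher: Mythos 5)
The paper does not prove this theorem: it is quoted from Fathi's paper \cite{Fathi}, and Remark~\ref{rem-fathi-thm} explicitly warns that Theorem~\ref{thm-fathi} ``is much more difficult than'' the elementary averaged estimate \eqref{eqn-L-0-time-length-of-chords-ineq}, with Fathi's proof relying on ``ingenious arguments from the general theory of metric spaces.'' Your easy inequality $T \geq 1/L_{max}$ is the basic calculus bound already recalled just before the theorem, and your closing compactness argument extracting a chord of time-length exactly $T$ is sound. The gap lies in the hard direction $L_{max} \leq 1/T$, specifically in the regularization step, which is not a routine technicality but precisely the content of the theorem.

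The trouble is that the truncated arrival-time function $g = \min(f,\tau)$ can jump from $0$ to $\tau$ at points arbitrarily close to $X_0$: if a point $x \notin X_0$ has a backward orbit that never meets $X_0$, then $g(x) = \tau$, and for a general compact $X_0$ such points accumulate on $X_0$. The flow average $g_\sigma$ does satisfy $g_\sigma \leq \sigma/2$ on $X_0$ itself, but it still equals $\tau$ at nearby points whose forward $\sigma$-piece of orbit also misses $X_0$; it is therefore only lower semicontinuous, not locally Lipschitz transversally as you assert. Consequently any spatial mollification -- including the transversal one you propose -- averages the values $0$ and $\tau$ across this jump, raising the smoothed function on $X_0$ to something close to $\tau$ and destroying the constraint $F|_{X_0} \leq 0$; the $\delta$-slack built into $\chi$ cannot absorb a jump of size $\tau$. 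A second problem of the same nature: gluing the chart-wise mollifications $h_i$ by a partition of unity $\{\rho_i\}$ produces cross terms $(L_v\rho_i)(h_i - h_j)$ in the Lie derivative, and the discrepancies $h_i - h_j$ are of order $1$ near the discontinuity set of $g_\sigma$, not $O(\varepsilon)$. Producing a smooth compactly supported $F$ satisfying both boundary conditions and the sharp bound $\max_M L_v F \leq 1/\tau + \varepsilon$ is exactly the ``Urysohn-type theorem under a dynamical constraint'' that Fathi proves, and it requires genuinely new ideas beyond flow averaging plus mollification.
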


\begin{rem}
\label{rem-fathi-thm} {\rm Replace the maximum of $L_v F$  by $|| L_v F ||$ in the definition of
$L_{max} (X_0,X_1;v)$  and call the resulting
quantity $L_0 (X_0,X_1;v)$. Then, as it was shown in \cite[Section
4.1]{BEP} by a rather basic averaging argument,
\begin{equation}
\label{eqn-L-0-time-length-of-chords-ineq}
\min \{ T(X_0,X_1;v), T(X_1,X_0;v)\} = 1/L_0 (X_0,X_1;v).
\end{equation}
In other words, if $L_0 (X_0,X_1;v) > 0$, then there exists {\it either}
 a chord of $v$ from $X_0$ to $X_1$ {\it or} a chord of $v$ from $X_1$ to $X_0$.
(Note that none of the results in \cite{BEP} about the existence of chords says anything about the direction of chords!)

Theorem~\ref{thm-fathi} is much more difficult than
\eqref{eqn-L-0-time-length-of-chords-ineq} -- in fact, Fathi's proof
of Theorem~\ref{thm-fathi} uses, along with the averaging argument
similar to the one in \cite[Section 4.1]{BEP}, some ingenious
arguments from the general theory of metric spaces. }
\end{rem}

\section{Poisson bracket invariants}
\label{sec-pb4-generalities}

Let $(M,\omega)$ be a connected symplectic manifold.

We say that sets $X_0$, $X_1$, $Y_0$, $Y_1\subset M$ form an {\it
admissible quadruple}, if they are compact and $X_0\cap X_1 = Y_0\cap Y_1 = \emptyset$.

Assume $U$ is an open subset of $M$ and $X_0, X_1, Y_0, Y_1\subset
U$ is an admissible quadruple. Define
\[
pb_4^U (X_0,X_1,Y_0,Y_1) := \inf_{F,G} ||\{F,G\}||,
\]
\[
pb_4^{U,+} (X_0,X_1,Y_0,Y_1) := \inf_{F,G} \max_M \{F,G\},
\]
\[
pb_4^{U,-} (X_0,X_1,Y_0,Y_1) := \inf_{F,G} (-\min_M \{F,G\}),
\]
where the infimum in all the cases is taken over all $F,G\in C^\infty_c
(U)$ such that
\begin{equation}
\label{eqn-F-G-X-Y-ineqs-defn-pb4} F|_{X_0}\leq 0, \ F|_{X_1} \geq
1, G|_{Y_0}\leq 0, G|_{Y_1}\geq 1.
\end{equation}
One can prove
similarly to \cite{BEP} that this class of pairs $(F,G)$ can be
replaced, without changing the infimums, by a smaller class where
the inequalities for $F,G$ are replaced by the equalities on some
open neighborhoods of the sets $X_0, X_1, Y_0, Y_1$.

Let us now define a stabilized version of $pb_4^{U,\pm}$.
Identify the cotangent bundle $T^* \SP^1$ with the cylinder $\R\times
\SP^1$ equipped with the coordinates $r$ and $\theta\ (\textrm{mod}\ 1)$ and the
standard symplectic structure $dr\wedge d\theta$.
As above, write $\hX_i$, $\hY_i$ for the
$\SP^1$-stabilizations of $X_i$ and $Y_i$ in $M \times T^*\SP^1$.
Given an open set $U\subset M$
and an admissible quadruple
$X_0, X_1, Y_0, Y_1\subset U$, set
\[
\hpb_4^{U,+} (X_0,X_1,Y_0,Y_1):= pb_4^{U\times T^* \SP^1, +}
(\hX_0, \hX_1, \hY_0, \hY_1),
\]
\[
\hpb_4^{U,-} (X_0,X_1,Y_0,Y_1):= pb_4^{U\times T^* \SP^1, -}
(\hX_0, \hX_1, \hY_0, \hY_1).
\]

\bigskip
\noindent
{\bf If $U=M$,
or it is clear from the context what $X_0,X_1,Y_0,Y_1$ are meant, we will omit the corresponding indices
and sets in the notation for $pb_4$, $pb^\pm_4$ and $\hpb_4$, $\hpb^\pm_4$.}

\bigskip

The following properties of $pb_4^\pm$, $\hpb_4^\pm$ follow easily
from the definitions, similarly to the corresponding properties for
$pb_4$ (cf. \cite{BEP}):

\medskip
\noindent{\sc Anti-symmetry:}
\begin{eqnarray}
\label{eqn-pb4+-anti-symmetry}
pb_4^{U,+} (X_0,X_1,Y_0,Y_1) = pb_4^{U,-} (X_1,X_0,Y_0,Y_1) = pb_4^{U,-} (X_0,X_1,Y_1,Y_0) =\\
=  pb_4^{U,-} (Y_0,Y_1,X_0,X_1) = pb_4^{U,+} (Y_1,Y_0,X_0,X_1)\nonumber,
\end{eqnarray}
\begin{equation}
\label{eqn-pb4-pb4+-comparison}
pb_4 \geq \max \{ pb_4^+, pb_4^-\}.
\end{equation}
Similar claims hold also for $\hpb_4^{U,\pm}$.

\medskip
\noindent{\sc Behavior under products:}

 Suppose that $M$ and $N$ are connected
symplectic manifolds. Equip $M\times N$ with the product symplectic
form. Let $K\subset N$ be a compact subset. Then for every
collection $X_0, X_1,Y_0,Y_1$ of compact subsets of $M$
\begin{equation}\label{eqn-pb-products}
pb_4^{M\times N,\pm} (X_0\times K, X_1\times K,Y_0\times K,Y_1\times
K)\leq pb_4^{M,\pm} (X_0, X_1,Y_0,Y_1)
\end{equation}
and a similar claim holds also for $\hpb_4^\pm$.

In particular, for any
$U\subset M$, $X_0,X_1,Y_0,Y_1\subset U$,
\[
\hpb_4^{U,\pm} (X_0,X_1,Y_0,Y_1)\leq pb_4^{U,\pm} (X_0,X_1,Y_0,Y_1).
\]

\medskip
\noindent{\sc Monotonicity:} Assume $U\subset W$ are opens sets in $M$ and $X'_0, X'_1, Y_0', Y'_1\subset U\subset W$ is an admissible quadruple. Let $X_0,X_1,Y_0,Y_1$ be another admissible quadruple such that
$X_0\subset X'_0, X_1\subset X'_1, Y_0\subset Y'_0, Y_1\subset Y'_1$. Then
\begin{eqnarray}
\label{eqn-pb4-monotonicity}
pb_4^{W,\pm} (X_0,X_1,Y_0,Y_1)\leq pb_4^{U,\pm} (X'_0,X'_1,Y'_0,Y'_1).
\end{eqnarray}
A similar inequality holds also for $\hpb_4^\pm$.

\medskip
\noindent{\sc Semi-continuity:}

Suppose that a sequence
$X_0^{(j)}, X_1^{(j)}, Y_0^{(j)}, Y_1^{(j)}$, $j\in\N$, of ordered collections
converges (in the sense of the Hausdorff distance between sets) to a collection $X_0,X_1,Y_0,Y_1$. Then
\begin{equation}
\label{eqn-Hausdorff-convergence}
\limsup_{j\to +\infty} pb_4^\pm (X_0^{(j)}, X_1^{(j)}, Y_0^{(j)}, Y_1^{(j)}) \leq pb_4^\pm (X_0,X_1,Y_0,Y_1).
\end{equation}
A similar inequality holds also for $\hpb_4^\pm$.

\bigskip
\noindent
{\bf Proof of Theorem~\ref{thm-pb-new}.}
Let us prove that
\begin{equation}
\label{eqn-1-over-pb4+=bkappa-aut}
1/pb_4^+ (X_0,X_1,Y_0,Y_1)= \bkappa_{aut}.
\end{equation}
To prove that
\begin{equation}
\label{eqn-1-over-pb4+-leq-1-bkappa-aut}
1/pb_4^+ (X_0,X_1,Y_0,Y_1)\leq \bkappa_{aut}.
\end{equation}
we need to prove that
\begin{equation}
\label{eqn-pb4+-geq-1-over-kappa-aut}
pb_4^+ (X_0,X_1,Y_0,Y_1)\geq 1/\kappa
\end{equation}
for any $\kappa \in\Upsilon_{aut}$. Pick such a $\kappa\in\Upsilon_{aut}$ and any $F,G\in C^\infty_c
(M)$ satisfying \eqref{eqn-F-G-X-Y-ineqs-defn-pb4}.
Then the pair $(Y_0,Y_1)$
autonomously $\kappa$-interlinks the pair $(X_0,X_1)$ and, since $G$ 1-separates $Y_0$ and $Y_1$, this means that there exists a chord of $G$
from $X_0$ to $X_1$ of time-length $\leq \kappa$. Restricting $F$ to the chord and applying the mean value theorem
from the basic calculus one readily obtains that
the function $L_{\sgrad G} F=\{ F,G\}$ takes a value greater or equal to $1/\kappa$ at some point of the chord
and thus $\max_M \{ F,G\}\geq 1/\kappa$. Since this holds for any $F,G\in C^\infty_c (M)$
satisfying \eqref{eqn-F-G-X-Y-ineqs-defn-pb4}, we obtain \eqref{eqn-pb4+-geq-1-over-kappa-aut} and hence
\eqref{eqn-1-over-pb4+-leq-1-bkappa-aut}.

Let us prove that
\begin{equation}
\label{eqn-1-over-pb4+-geq-1-bkappa-aut}
k_{aut}:=1/pb_4^+ (X_0,X_1,Y_0,Y_1)\leq \bkappa_{aut}.
\end{equation}
Equivalently, this means to prove the following: Let $G : M\to\R$ be a complete Hamiltonian that
$\Delta$-separates $Y_0$ and $Y_1$. We need to show that there exists a chord of $G$ from $X_0$ to
$X_1$ of time-length $\leq \kappa_{aut}/\Delta$.

We may assume without loss of generality that $\max_{Y_0} G =0$, $\min_{Y_1} G =1$, $\Delta=1$ (the general case is reduced to
this one if one replaces $G$ with $u\circ G$ for an appropriate function $u:\R\to\R$). Let $g_t$ be the flow of $G$. Since the flow $g_t$ is defined for all $t\in\R$, the following subset of $M$ is well-defined and compact:
\[
\Theta_{aut} := \bigcup_{0\leq t\leq \kappa_{aut}} g_t (X_0) \cup Y_0\cup Y_1.
\]
Let $G': M\to\R$ be a compactly supported Hamiltonian which coincides with
$G$ on an open neighborhood of $\Theta_{aut}$. Since $G$ coincides with $G'$ on
$Y_0$ and $Y_1$, we get that
\[
\max_{Y_0} G = \max_{Y_0} G' = 0,\ \min_{Y_1} G = \min_{Y_1} G' = 1.
\]
Since
\[
\kappa_{aut}=1/pb^+_4 (X_0,X_1,Y_0,Y_1) >0,
\]
we get that
\[
\max_M \{ F',G'\} = \max_M L_{\sgrad G'} F' \geq
\]
\[
\geq L_{max} (X_0,X_1; \sgrad G')
\geq pb^+_4 (X_0,X_1,Y_0,Y_1) = 1/\kappa_{aut}
\]
for any compactly supported $F': M\to\R$ such that $F'|_{X_0}\leq 0$, $F'|_{Y_1}\geq 1$.
Therefore, by Theorem~\ref{thm-fathi}, there exists a chord of $G'$ from $X_0$ to $X_1$
of time-length $\leq \kappa_{aut}$. This chord of $G'$ is also a chord of $G$ -- indeed, $G'$ coincides with $G$ on a neighborhood of $\Theta_{aut}$ and therefore for any $t\in [0,\kappa_{aut}]$ the time-$[0,t]$ flows of $G$ and $G'$ coincide
on $X_0$.

Thus we have proved the existence of a chord of $G$ from $X_0$ to $X_1$
of time-length $\leq \kappa_{aut}$. This finishes the proof of \eqref{eqn-1-over-pb4+-geq-1-bkappa-aut}. Together
\eqref{eqn-1-over-pb4+-leq-1-bkappa-aut} and \eqref{eqn-1-over-pb4+-geq-1-bkappa-aut} imply \eqref{eqn-1-over-pb4+=bkappa-aut}.

Let us now prove that
\begin{equation}
\label{eqn-kappa-leq-1-over-hpb4+}
\bkappa \leq \kappa := 1/pb^+_4 (\hX_0,\hX_1,\hY_0, \hY_1).
\end{equation}
Equivalently, this means to prove the following: Let $G : M\times \SP^1\to\R$ be a complete Hamiltonian that
$\Delta$-separates $Y_0$ and $Y_1$. We need to show that there exists a chord of $G$ from $X_0$ to
$X_1$ of time-length $\leq \kappa/\Delta$.

The argument is very similar to the argument used above in the autonomous case.
Namely,
consider an {\it autonomous} Hamiltonian
$$H: M \times T^* \SP^1 \to \R,\ (x,r,\theta) \to G(x,\theta)+r,$$
generating a Hamiltonian flow $h_t: M \times T^* \SP^1\to M \times T^* \SP^1$.
Note that the projection $M\times T^* \SP^1\to M$ maps each trajectory of $h_t$
to a trajectory of $g_t$ of the same time-length.
Thus, it suffices to prove the existence of a chord of $H$ from $\hX_0$ to $\hX_1$
of time-length $\leq \kappa/\Delta$.

The Hamiltonian flow $h_t$ of $H$ is defined for all times, since so is the Hamiltonian flow $g_t$ of $G$.
Note that $r=0$ on $\hY_0$, $\hY_1$. Therefore
\[
\max_{\hY_0} H = \max_{Y_0\times \SP^1} G,\ \min_{\hY_1} H = \min_{Y_1\times \SP^1} G,
\]
meaning that $H$ $\Delta$-separates $\hY_0$ and $\hY_1$.
Thus, $H$ is a complete autonomous Hamiltonian on $M\times T^* \SP^1$ that $\Delta$-separates $\hY_0$ and $\hY_1$.
By the result in the autonomous case, $H$ has a chord from $\hX_0$ to $\hX_1$ of time-length $\leq \kappa/\Delta$,
as required.
This finishes the proof of \eqref{eqn-kappa-leq-1-over-hpb4+} and of
the theorem.\Qed

\section{Lagrangian tetragons}
\label{sec-mohnke}

The proof of the positivity of $\hpb_4$ for Lagrangian tetragons
relies on the following
proposition which is based on a method from \cite{BEP} relating
deformations of symplectic forms and Poisson brackets.

\begin{prop}
\label{prop-FdG-Lambda}
Let $(M, \omega=d\lambda)$ be a connected (possibly open) exact symplectic manifold with a fixed primitive $\lambda$
and let $\Lambda\subset M$ be a closed Lagrangian submanifold.
Let
$F,G\in C^\infty_c (M)$.
Assume

\medskip
\noindent
1. $(M,\omega)$ does not admit
exact closed Lagrangian submanifolds Lagrangian isotopic to $\Lambda$.

\medskip
\noindent 2. The form $FdG$ is closed on $\Lambda$.

\medskip
\noindent 3. There exists a number $c >0$ so that the cohomology class of $FdG$ on $\Lambda$ satisfies
$[FdG|_\Lambda] = -c[\lambda|_\Lambda]$.

\medskip
\noindent
Then $\max_M \{ F,G\}\geq c$.

\end{prop}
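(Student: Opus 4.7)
The plan is to argue by contradiction. Assuming $\max_M\{F,G\} < c$, I will construct a closed $\lambda$-exact Lagrangian submanifold of $(M,\omega)$ which is Lagrangian isotopic to $\Lambda$, contradicting hypothesis~1. For $s \in [0,1/c]$, introduce the deformed primitive $\lambda_s := \lambda + s\,F\,dG$ and the associated closed $2$-form $\omega_s := d\lambda_s = \omega + s\,dF\wedge dG$. Since $(dF\wedge dG)^2 = 0$, the standard pointwise identity obtained by computing $dF\wedge dG\wedge\omega^{n-1}$ yields $\omega_s^n = (1 - s\{F,G\})\,\omega^n$, so the assumption $\max_M\{F,G\} < c$ makes $\omega_s$ symplectic for every such $s$. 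Hypothesis~2 is precisely the statement $(dF\wedge dG)|_\Lambda = 0$, so $\omega_s|_\Lambda \equiv 0$ and $\Lambda$ remains Lagrangian throughout the family. Hypothesis~3 then gives $[\lambda_s|_\Lambda] = (1-sc)[\lambda|_\Lambda]$ in $H^1(\Lambda;\R)$, which vanishes at $s = 1/c$; hence $\Lambda$ is $\lambda_{1/c}$-exact in $(M,\omega_{1/c})$.

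The next step is to transport this exactness back to $(M,\omega)$ via a Moser isotopy. Since $\dot\omega_s = d(F\,dG)$ is exact with compactly supported primitive $F\,dG$, I would define the compactly supported vector field $V_s$ by $i_{V_s}\omega_s = -F\,dG$ and let $\phi_s$ be its flow, so that $\phi_s^*\omega_s = \omega$. The crucial computation is at the level of primitives: using the standard formula $\frac{d}{ds}(\phi_s^*\lambda_s) = \phi_s^*\bigl(L_{V_s}\lambda_s + F\,dG\bigr)$ and $L_{V_s}\lambda_s = d(\lambda_s(V_s)) + i_{V_s}\omega_s = d(\lambda_s(V_s)) - F\,dG$, one sees that $\frac{d}{ds}(\phi_s^*\lambda_s)$ is globally exact, and integrating gives $\phi_s^*\lambda_s = \lambda + dh_s$ for a smooth function $h_s$ on $M$. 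Now set $\Lambda' := \phi_{1/c}^{-1}(\Lambda)$. Then $\Lambda'$ is a closed $\omega$-Lagrangian submanifold, Lagrangian isotopic to $\Lambda$ in $(M,\omega)$ through the family $s \mapsto \phi_s^{-1}(\Lambda)$; and if $f \in C^\infty(\Lambda)$ satisfies $\lambda_{1/c}|_\Lambda = df$, then pulling back gives $\lambda|_{\Lambda'} = d\bigl(f\circ\phi_{1/c} - h_{1/c}\bigr)|_{\Lambda'}$, so $\Lambda'$ is $\lambda$-exact, the contradiction sought.

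The main obstacle in this plan is the bookkeeping of primitives: a priori, a Moser isotopy between two exact symplectic forms identifies the forms themselves, but not any particular pair of primitives up to an exact shift. The specific normalization $i_{V_s}\omega_s = -F\,dG$ (with no closed correction added to $V_s$) is precisely what forces $\phi_s^*\lambda_s - \lambda$ to be globally exact, and this is what allows exactness of $\Lambda$ with respect to the deformed primitive $\lambda_{1/c}$ to descend to exactness of $\Lambda'$ with respect to the originally prescribed primitive $\lambda$.
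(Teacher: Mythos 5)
Your proof is correct, and it follows the same underlying strategy as the paper — deform $\omega$ to $\omega_\tau = \omega + \tau\,dF\wedge dG$, check that $\Lambda$ stays Lagrangian, run Moser, and use hypothesis~3 to produce an exact Lagrangian in the isotopy class of $\Lambda$ — but you streamline the Moser step in a way the paper does not. The paper invokes Moser's method to get \emph{some} compactly supported isotopy $\vartheta_\tau$ with $\vartheta_\tau^*\omega = \omega_\tau$ and then observes that the resulting $1$-form $\beta := (\vartheta_a^*)^{-1}\lambda_a - \lambda$ is merely closed and compactly supported, not necessarily exact; it then needs a \emph{second} isotopy, the locally Hamiltonian flow $f_t$ of the vector field $v$ with $i_v\omega = \beta$ (which works because $\beta(v)=\omega(v,v)=0$, so $L_v\beta=0$), to shift $\Lambda_a$ to an exact Lagrangian. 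You avoid this second step entirely by pinning down the Moser vector field via $i_{V_s}\omega_s = -F\,dG$ (the natural primitive of $\dot\omega_s$, with no closed correction term), whereupon the computation of $\tfrac{d}{ds}(\phi_s^*\lambda_s)$ shows the primitives are matched up to a globally exact shift: $\phi_s^*\lambda_s = \lambda + dh_s$. What the paper's version buys is that one never has to worry about which Moser primitive to use; what yours buys is a one-pass argument where exactness of $\Lambda$ with respect to $\lambda_{1/c}$ descends directly to $\lambda$-exactness of $\phi_{1/c}^{-1}(\Lambda)$. One small point worth making explicit in your write-up: $V_s$ is compactly supported (its support lies in $\supp F\cup\supp G$), so the flow $\phi_s$ is complete and the isotopy is compactly supported — this is what the Moser argument needs on an open manifold, and it is also what makes your $h_s$ compactly supported.
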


\bigskip
\noindent
{\bf Proof of Proposition~\ref{prop-FdG-Lambda}.}
Following \cite{BEP}, consider the deformation
\[
\omega_\tau := \omega + \tau dF\wedge dG, \ \tau\in\R.
\]
A direct calculation shows that
\[
dF\wedge dG\wedge \omega^{n-1} = -\frac{1}{n}\{ F,G\} \omega^n.
\]
and thus
\[
\omega_\tau^n = (1-\tau \{ F,G\})\omega^n.
\]
Thus $\omega_\tau$ is symplectic for any $\tau\in I:=[0, 1/\max_M \{ F,G\} )$.

For any $\tau\in I$ the form $\omega$ can be mapped
(using Moser's method \cite{Moser}) to $\omega_\tau$ by a compactly supported
isotopy $\vartheta_\tau: (M, \omega_\tau)\to (M, \omega)$. Note that, by condition 2,
$\Lambda$ is Lagrangian with respect to $\omega_\tau$.
Therefore
the manifold
$\Lambda_\tau  :=
\vartheta_\tau (\Lambda)$ is a Lagrangian submanifold of
$(M,\omega)$ Lagrangian isotopic to $\Lambda$.

Assume, by contradiction, that $\max_M \{ F,G\}< c$, i.e. $a:= 1/c \in I$.
As we noted above $\Lambda_{a}$ is a Lagrangian submanifold with respect
to $\omega$ Lagrangian isotopic to $L$. We claim that $\Lambda_a$, and hence $\Lambda$,
is Lagrangian isotopic to an exact Lagrangian submanifold of
$(M,\omega)$ --  this would yield a contradiction with condition 1 and
thus prove that, in fact, $\max_M \{ F,G\}\geq c$.

Indeed, by condition 3, $\Lambda$ is exact with
respect to the primitive $\lambda_a:= \lambda + aFdG$ of $\omega_a$. Therefore
$\Lambda_a = \vartheta_a (\Lambda)$ is exact with respect to the primitive form
$\alpha:=(\vartheta_a^*)^{-1} \lambda_a$ of $d\alpha=\omega$.
Since the isotopy
$\vartheta_\tau$ and the functions $F,G$ are compactly supported, we get that
$\beta:= \alpha-\lambda$
is a closed compactly supported 1-form on $M$.

Let $f_t : M \to M$ be the locally Hamiltonian
flow of $\beta$ generated by
the vector field $v$ with $i_v \omega=\beta$. Since $\beta$ is compactly supported, $f_t$ is defined for all $t$. Also, since $f_t$ is a locally Hamiltonian flow, each $f_t$ is a symplectomorphism of $(M,\omega)$.
Observe that for all $t$
\[
\frac{d}{dt} f_t^* \lambda = f_t^* L_v \lambda = d (f_t^* (i_v \lambda)) + f_t^* \beta
\]
and
\[
f_t^* \beta = \beta.
\]
Therefore for all $t$
\[
\frac{d}{dt} [f_t^* \lambda ] = [\beta],
\]
which yields
\[
[f_1^* \lambda|_{\Lambda_a}] =
[\lambda|_{\Lambda_a}] + [\beta|_{\Lambda_a}] = [\alpha|_{\Lambda_a}] = 0.
\]
Thus $f_1 (\Lambda_a)$ is an
exact Lagrangian submanifold of $(M,\omega)$ with respect to $\lambda$, and $f_t (\Lambda_a)$, $t\in [0, 1]$, is the desired Lagrangian isotopy connecting $\Lambda_a = f_0 (\Lambda_a)$ to $f_1 (\Lambda_a)$. This finishes the proof of the claim and of the proposition.
\Qed

\subsection{Construction and interlinking of Lag\-ran\-gian tetra\-gons}
\label{subsec-tetragons-main-result}

Let $(\Sigma^{2k-1},\xi)$, $k\geq 1$, be a connected (not
necessarily closed) contact manifold with a co-orientable contact
structure $\xi$ (if $k=1$, the contact structure $\xi$ is formed by the zero subspaces of the tangent spaces).
Let $L$ be a closed connected Legendrian submanifold of $\Sigma$ (in the case $k=1$ the submanifold $L$ is just a point).

Pick a contact form $\lambda_0$ on $\Sigma$: $\xi = \{ \lambda_0=0\}$. (In the case $k=1$ let $\lambda_0$ be any non-vanishing form on $\Sigma$). Denote by $\psi_t:
\Sigma\to \Sigma$ the Reeb flow of $\lambda_0$.
(In the case $k=1$ the Reeb vector field $v$
is defined just by the condition $\lambda_0 (v)\equiv 1$).
Pick $T>0$ so that $\psi_t (L)$ is well-defined and disjoint from $L$ for all
$t\in (0,T]$.
Let $0<R_0<R_1$.

Let
$(\Sigma\times\R_+,
d(s\lambda_0))$ be the symplectization of $(\Sigma,\xi)$ (here $s$ is the coordinate
on the $\R_+$-factor of $\Sigma\times\R_+$).

Define a {\it Lagrangian tetragon} $\Lambda''\subset \Sigma\times\R_+$:
\[
\cF'':= \bigcup_{0\leq t\leq T} \psi_t (L)\times R_0,\ \
\cC'':= \bigcup_{0\leq t\leq T} \psi_t (L)\times R_1,
\]
\[
\cL'' := \psi_T (L)\times [R_0,R_1],\ \
\cH'' := L\times [R_0,R_1],
\]
\[
\Lambda'':= \cF''\cup\cC''\cup\cL''\cup\cH''.
\]
This is a singular Lagrangian submanifold. We will define its smoothening as follows.

Consider an embedding $\Phi: L\times (\R_+\times [0,T])\to
\Sigma\times \R_+$ given by
\[
\Phi (x \times (s,t)) := (\psi_t (x),s).
\]
Then for any smooth embedded loop $\gamma\subset [R_0,R_1]\times
[0,T]$ the restriction of $\Phi$ to $L\times \gamma$ is a Lagrangian
embedding. Choose a family $\gamma_\varepsilon$ of smooth embedded
loops in $[R_0,R_1]\times [0,T]$ $C^0$-converging to the boundary of
the rectangle $[R_0,R_1]\times [0,T]$ as $\varepsilon\to 0$ and
denote by
\[
\Lambda''_\varepsilon := \Phi (L\times \gamma_\varepsilon)\subset
\Sigma\times \R_+
\]
the resulting smooth Lagrangian submanifold, called a {\it smoothened Lag\-ran\-gian tetragon} in $\Sigma\times\R_+$.

Let $K$ be a closed connected manifold.
Then
\[
\Lambda':= \Lambda''\times K
\]
is a {\it Lagrangian tetragon in} $\Sigma\times\R_+\times T^* K$
and $\Lambda'_\varepsilon:=\Lambda''_\varepsilon\times K$ is its smoothening.

Assume now that
$U$ is a domain in $\Sigma$, $\cI\subset \R_+$ is an open interval containing $[R_0,R_1]$
and $W$ is an open tubular neighborhood of $K$ in $T^* K$ so that $\Lambda'$ is contained in $U\times\cI\times W$.
Denote by $\eta$ the standard 1-form on $T^* K$.
Let $(M,\omega=d\lambda)$ be an exact connected symplectic manifold and let $\nu: U\times \cI\times W\to M$ be a symplectic embedding such that
\begin{equation}
\label{eqn-nu-respects-primitives}
\nu^* \lambda = s\lambda_0\oplus \eta.
\end{equation}
Denote by $\Lambda := \nu (\Lambda')$ the resulting {\it Lagrangian tetragon in $M$}
and let
\[
\cF := \nu (\cF''\times K),\ \cC:= \nu (\cC''\times K),\ \cL:= \nu (\cL''\times K),\ \cH:= \nu (\cH''\times K),
\]
be, respectively, its floor, ceiling, low wall and high wall.
Let $\Lambda_\varepsilon := \nu (\Lambda'_\varepsilon)$ be its smoothening.

Note that for
sufficiently small $\varepsilon$ all Lagrangian
submanifolds $\Lambda_\varepsilon$ are Lagrangian isotopic in
$(M,\omega)$. From this point on we consider only such
$\varepsilon$.

\begin{thm}
\label{thm-pb4-Mohnke}
Assume that
$M$ does not admit closed
exact
Lagrangian submanifolds Lagrangian isotopic to
$\Lambda_\varepsilon$.
Then
\[
pb_4^{M,+} (\cF, \cC, \cL, \cH)=
\bigg( (R_1-R_0)T \bigg)^{-1}  >0,
\]
and thus,
$\Lambda$ is autonomously $(R_1-R_0)T$-interlinked.
\end{thm}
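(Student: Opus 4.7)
The plan is to prove the exact equality $pb_4^{M,+}(\cF,\cC,\cL,\cH) = ((R_1-R_0)T)^{-1}$ by matching upper and lower bounds, and then invoke Theorem~\ref{thm-pb-new} to conclude the autonomous $(R_1-R_0)T$-interlinking.

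For the upper bound I would construct explicit admissible $F,G\in C^\infty_c(M)$ with $\max_M\{F,G\}$ arbitrarily close to $((R_1-R_0)T)^{-1}$. In the $(x,s,t)$ coordinates on the flow box $\Phi(L\times\R_+\times[0,T])$ transplanted to $M$ via $\nu$, the identity $\Phi^*(s\lambda_0)=s\,dt$ (which rests on $L$ being Legendrian and on the Reeb invariance of $\lambda_0$) reduces the computation to the two-dimensional $(s,t)$-plane. Take $f(s)$ equal to $0$ near $R_0$ and $1$ near $R_1$, monotone with $|f'|$ close to $1/(R_1-R_0)$, and similarly $g(t)$ equal to $1$ near $0$ and $0$ near $T$ with $|g'|$ close to $1/T$; extend $F=f(s)$ and $G=g(t)$ off the flow box using bump functions. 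Using $\sgrad s=v$ (the Reeb vector field), a direct calculation gives $\{F,G\}=-f'(s)g'(t)$ near the tetragon, with a standard cutoff correction elsewhere, yielding the desired bound in the limit.

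For the lower bound I would apply Proposition~\ref{prop-FdG-Lambda} to the smoothened tetragon $\Lambda_\varepsilon$, whose Lagrangian isotopy class contains no exact representative by hypothesis. Given any admissible $F,G$, assume (via the remark in Section~\ref{sec-pb4-generalities}) that $F\equiv 0$ on a neighborhood of $\cF$, $F\equiv 1$ on one of $\cC$, $G\equiv 0$ on one of $\cL$, $G\equiv 1$ on one of $\cH$. For $\varepsilon$ small enough, $\Lambda_\varepsilon$ lies in the union of these four neighborhoods with its rounded corners inside pairwise intersections, so at every point of $\Lambda_\varepsilon$ at least one of $F,G$ is locally constant; hence $dF\wedge dG$ vanishes on $\Lambda_\varepsilon$ and $FdG|_{\Lambda_\varepsilon}$ is closed. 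Using the diffeomorphism $\Lambda_\varepsilon\cong L\times\SP^1\times K$, I would verify the cohomological identity $[FdG|_{\Lambda_\varepsilon}]=-\tfrac{1}{(R_1-R_0)T}[\lambda|_{\Lambda_\varepsilon}]$ by testing on a Künneth basis of $H^1$: on a loop inside the $L$ or $K$ factor at a fixed point of $\gamma_\varepsilon$, both integrals vanish (for $\lambda=s\lambda_0\oplus\eta$ because $\lambda_0|_{\psi_t(L)}=0$ and $\eta$ vanishes on the zero-section $K$; for $FdG$ because one of $F,G$ is locally constant along the loop and $\int dG=0$ over any closed loop); on the rectangle cycle $\alpha=\{x_0\}\times\gamma_\varepsilon\times\{k_0\}$, Stokes' theorem gives $\int_\alpha\lambda=\oint s\,dt=(R_1-R_0)T$, while $\int_\alpha FdG=-1$ since only the $\cC$-side contributes (there $F\equiv 1$ and $G$ runs from $1$ near the $\cH$-corner to $0$ near the $\cL$-corner). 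Proposition~\ref{prop-FdG-Lambda} then delivers $\max_M\{F,G\}\geq ((R_1-R_0)T)^{-1}$.

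The subtlest step will be establishing the cohomological identity on \emph{all} of $H^1(\Lambda_\varepsilon;\R)$ rather than on a single cycle; this depends essentially on identifying the product structure $L\times\SP^1\times K$ and on using the Legendrian hypothesis to kill the $L$-direction integrals. A secondary technical point is ordering the quantifiers correctly: $\varepsilon$ must be chosen small \emph{after} $F,G$ are fixed, so that the rounded corners of $\Lambda_\varepsilon$ sit inside the intersections where both $F$ and $G$ are locally constant; the freedom to shrink $\varepsilon$ is exactly the one noted in the paragraph immediately preceding the theorem.
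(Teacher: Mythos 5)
Your lower bound matches the paper's argument almost exactly: you invoke Proposition~\ref{prop-FdG-Lambda}, verify the closedness of $FdG|_{\Lambda_\varepsilon}$ via the ``one of $F,G$ locally constant'' observation, and check the cohomological proportionality on a K\"unneth basis of $H_1(\Lambda_\varepsilon)\cong H_1(L)\oplus\R\oplus H_1(K)$, using the Legendrian condition and $\eta|_K=0$ to kill the $L$- and $K$-cycles and Stokes on the rectangle cycle for the remaining generator. (A small correction: the period $\oint_{\gamma_\varepsilon} s\,dt$ is the area $(R_1-R_0)T-\delta(\varepsilon)$ of the smoothed rectangle, not exactly $(R_1-R_0)T$; you then let $\varepsilon\to 0$ at the very end, which is legitimate since $\varepsilon$ is chosen after $F,G$ and $\delta(\varepsilon)$ is independent of them. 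The paper instead fixes $\varepsilon$, subdivides $\Lambda_\varepsilon$ into four pieces and uses the semicontinuity property \eqref{eqn-Hausdorff-convergence}; the two bookkeeping schemes are equivalent.)

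Where you genuinely diverge is the upper bound. You build admissible $F=f(s)$, $G=g(t)$ directly and compute $\{F,G\}$. The paper instead takes \emph{only} the $s$-dependent function $u(s)$, applies the anti-symmetry relation \eqref{eqn-pb4+-anti-symmetry} so that $u$ plays the role of the separator of $(\cF,\cC)$, and then uses the \emph{equality} $1/pb_4^+=\bkappa_{aut}$ from Theorem~\ref{thm-pb-new}: since the flow of $u$ is $u'(s)\cdot(\text{Reeb field})$ and $u'\leq (R_1-R_0)^{-1}+\delta_2$, it has no chord from $\cH''$ to $\cL''$ of time-length less than roughly $(R_1-R_0)T$, which forces $pb_4^+\leq((R_1-R_0)T)^{-1}$ with no second function $G$ to construct at all. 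Your route requires making the Reeb-time function $t$ globally available: $\Psi(L\times[0,T])$ has positive codimension $k-1$ in $\Sigma$, so $t$ must be extended to a tubular neighborhood, multiplied by a cutoff $\chi_G$, and one must then control \emph{both} $d\tilde t(v)\leq 1+\delta$ on $\operatorname{supp}(dG)$ \emph{and} the extra term $g(\tilde t)\{f(s),\chi_G\}=-g(\tilde t)f'(s)\,d\chi_G(v)$ coming from the cutoff. This can be arranged (e.g.\ by choosing a tubular projection approximately invariant under the Reeb flow, so that $d\chi_G(v)$ is small), but the ``standard cutoff correction'' you allude to is precisely the place where all the work hides, and it is not automatic. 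In the prototype two-dimensional case the coordinate $t$ \emph{is} global and your construction is the one used in \cite{BEP}; in higher dimensions the paper's dynamical argument via Theorem~\ref{thm-pb-new} and anti-symmetry is the cleaner generalization, and I would recommend adopting it. With the upper bound handled either way, both routes conclude by applying the equality in Theorem~\ref{thm-pb-new}, as you do.
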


\medskip
\noindent
The autonomous interlinking of $\Lambda$ stated in the theorem
follows, by Theorem~\ref{thm-pb-new}, from the lower bound on $pb_4^{M,+} (\cF, \cC, \cL, \cH)$.
An immediate consequence of Theorem~\ref{thm-pb4-Mohnke} is as follows:

\medskip
\noindent
\begin{cor}
\label{cor-main} Assume that for every $m \in \N$,
$M \times T^* \T^m$ does not admit closed
exact
Lagrangian submanifolds Lagrangian isotopic to
$\Lambda_\varepsilon \times \T^m$.
Then
$\Lambda$ is stably $(R_1-R_0)T$-interlinked.
\end{cor}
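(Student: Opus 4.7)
The plan is to reduce stable interlinking of $\Lambda$ directly to Theorem~\ref{thm-pb4-Mohnke}, applied not in $M\times T^*\T^m$ itself but in $M\times T^*\T^{m+1}$. By definition, stable $(R_1-R_0)T$-interlinking is the assertion that for every integer $m\geq 0$ the pair $(\cL\times\T^m,\cH\times\T^m)$ $(R_1-R_0)T$-interlinks $(\cF\times\T^m,\cC\times\T^m)$ in $M\times T^*\T^m$. This is \emph{not} merely autonomous interlinking, so Theorem~\ref{thm-pb4-Mohnke} applied in $M\times T^*\T^m$ would only deliver the weaker conclusion; one must instead invoke the inequality $\bkappa\leq 1/\hpb_4^+$ from Theorem~\ref{thm-pb-new}.

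Fixing such an $m$, it therefore suffices to prove
\[
\hpb_4^{M\times T^*\T^m,+}(\cF\times\T^m,\cC\times\T^m,\cL\times\T^m,\cH\times\T^m)\ \geq\ \frac{1}{(R_1-R_0)T}.
\]
By the definition of $\hpb_4^+$, the left-hand side equals $pb_4^+$ evaluated in the ambient space $(M\times T^*\T^m)\times T^*\SP^1$ on the $\SP^1$-stabilized quadruple. Because $\SP^1=\T^1$, this ambient space is canonically $M\times T^*\T^{m+1}$, the zero-section $\T^m\times\SP^1$ is $\T^{m+1}$, and the stabilized quadruple is simply $(\cF\times\T^{m+1},\cC\times\T^{m+1},\cL\times\T^{m+1},\cH\times\T^{m+1})$.

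The decisive bookkeeping step is to recognise $\Lambda\times\T^{m+1}\subset M\times T^*\T^{m+1}$ as itself a Lagrangian tetragon in the sense of Section~\ref{subsec-tetragons-main-result}: it is built from the same Legendrian $L\subset\Sigma$, the same contact form $\lambda_0$, and the same parameters $R_0,R_1,T$, but with the auxiliary closed connected manifold $K$ of the construction replaced by $K\times\T^{m+1}$ and with the symplectic embedding taken to be $\nu$ times the inclusion of a tubular neighbourhood of the zero-section of $T^*\T^{m+1}$; the compatibility~\eqref{eqn-nu-respects-primitives} is preserved, and its smoothening is $\Lambda_\varepsilon\times\T^{m+1}$. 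Applying the corollary's hypothesis with $m+1$ in place of $m$ yields exactly the non-existence of closed exact Lagrangian submanifolds of $M\times T^*\T^{m+1}$ Lagrangian-isotopic to this smoothening, so Theorem~\ref{thm-pb4-Mohnke} gives the equality $pb_4^+=1/((R_1-R_0)T)$; feeding this back through Theorem~\ref{thm-pb-new} closes the argument.

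I do not foresee any substantive obstacle here: all the symplectic-topological content is already packaged inside Theorem~\ref{thm-pb4-Mohnke} (and hence Proposition~\ref{prop-FdG-Lambda}). The only point that demands care is the index shift $m\mapsto m+1$, which is precisely why the corollary's hypothesis must be imposed for \emph{every} $m\in\N$, not for a single fixed value; this is the subtlety I would make explicit in the final write-up.
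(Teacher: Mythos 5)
Your proof is correct and is exactly the argument the paper has in mind (the paper simply declares the corollary "an immediate consequence" of Theorem~\ref{thm-pb4-Mohnke} without spelling it out). You correctly identify the two small points that make it non-trivial to call "immediate": first, that stable interlinking as defined requires full (time-periodic, not merely autonomous) interlinking in each $M\times T^*\T^m$, so one must pass through $\hpb_4^+$ and hence through $M\times T^*\T^{m+1}$; and second, that $\Lambda\times\T^{m+1}$ is again a Lagrangian tetragon in the sense of Section~\ref{subsec-tetragons-main-result} with $K$ replaced by $K\times\T^{m+1}$, with the primitive-compatibility condition~\eqref{eqn-nu-respects-primitives} preserved and smoothening $\Lambda_\varepsilon\times\T^{m+1}$, which is exactly why the hypothesis must be imposed for every $m$. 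No gaps.
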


\medskip
\noindent
{\bf Proof of Theorem~\ref{thm-non-exact-pairs-yield-interlinked-tetr-in-symplectization-1}:}
Part A immediately
follows from Theorem~\ref{thm-pb4-Mohnke} with $K=\{\textit{point}\}$, $M=\Sigma\times\R_+$ and $\nu=Id$, and, similarly,
part B follows from Corollary \ref{cor-main}.
\Qed

\medskip
\noindent {\bf Proof of Theorem~\ref{thm-pb4-Mohnke}.}
1) First, let us prove
\begin{equation}
\label{eqn-upper-bound-on-pb4+-of-a-tetragon}
pb_4^{M,+} ( \cF, \cC, \cL, \cH)\leq \bigg( (R_1-R_0)T \bigg)^{-1}.
\end{equation}
By the symplectic invariance, the monotonicity (see \eqref{eqn-pb4-monotonicity}) and the product (see \eqref{eqn-pb-products}) properties of $pb_4^+$,
it suffices to prove that
\begin{equation}
\label{eqn-upper-bound-on-pb4+-of-a-tetragon-in-sympln}
pb_4^{\Sigma\times\cI,+} ( \cF'', \cC'', \cL'', \cH'')\leq \bigg( (R_1-R_0)T \bigg)^{-1}.
\end{equation}
Let us consider a Hamiltonian of the form $u(s)$ on $(\Sigma\times\R_+, d(s\lambda_0))$
such that:
\begin{itemize}
\item{} $u (s) = 0$ outside $(R_0+\delta_1, R_1 + \delta_1)$,
\item{} $u (R_1) = 1$,
\item{} $u$ is non-decreasing on $[R_0,R_1]$ and its derivative there satisfies $u'(s) \leq (R_1 -
R_0)^{-1} + \delta_2$,
\end{itemize}
Here $\delta_1,\delta_2$ are small positive constants. The obvious relation between the Hamiltonian flow
of $u$ and the Reeb flow of $\lambda_0$ easily implies that the
flow of $u$ does not have
chords from $\cH''$ to $\cL''$ of time-length $\leq (R_1 - R_0)^{-1} T  -\epsilon$,
where $\epsilon\to 0$ as $\delta_1,\delta_2\to 0$.
Note also that $u$ $1$-separates $\cF''$ and $\cC''$.
Therefore Theorem~\ref{thm-pb-new} implies that
\[
pb_4^{M,+} (\cH'',\cL'',\cF'',\cC'')\leq \bigg( (R_1-R_0)T \bigg)^{-1},
\]
which, by \eqref{eqn-pb4+-anti-symmetry}, yields \eqref{eqn-upper-bound-on-pb4+-of-a-tetragon-in-sympln} and
hence \eqref{eqn-upper-bound-on-pb4+-of-a-tetragon}. Let us emphasize that the above argument involves Hamiltonian chords connecting the walls of the Lagrangian tetragon, even though the actual applications
deal with the chords connecting its floor and ceiling.

\medskip

2) Let us now prove
\begin{equation}
\label{eqn-lower-bound-on-pb4+-of-a-tetragon}
pb_4^{M,+} ( \cF, \cC, \cL, \cH)\geq \bigg( (R_1-R_0)T \bigg)^{-1}.
\end{equation}

The sets
$\Lambda_\varepsilon$ $C^0$-converge to $\Lambda$ as $\varepsilon\to
0$. Moreover, we can subdivide $\Lambda_\varepsilon$ into four sets
$\cF_\varepsilon, \cC_\varepsilon, \cL_\varepsilon, \cH_\varepsilon$
that $C^0$-converge, respectively, to $\cF,\cC,\cL,\cH$ as
$\varepsilon\to 0$. In view of \eqref{eqn-Hausdorff-convergence} we
need to prove that
\[
pb_4^{M,+} (\cF_\varepsilon, \cC_\varepsilon, \cL_\varepsilon,
\cH_\varepsilon ) \geq \bigg( (R_1-R_0)T - \delta (\varepsilon)
\bigg)^{-1}
\]
for some $\delta (\varepsilon) \to 0$ as $\varepsilon\to 0$.

Denote by $D_{\gamma_\varepsilon}$ the disk bounded by $\gamma_\varepsilon$ in
$[R_0,R_1]\times [0,T]$. Set
\[
\Gamma_\varepsilon := \Phi (\{ \textrm{point}\}\times \gamma_\varepsilon)\subset \Lambda''_\varepsilon
\]
and let
\[
D_{\Gamma_\varepsilon} := \Phi (\{ \textrm{point}\}\times D_{\gamma_\varepsilon})
\]
be the disk bounded by $\Gamma_\varepsilon$.

Pick arbitrary smooth functions $F, G\in C^\infty_c (M)$
satisfying the following conditions:
\begin{equation}
\label{eqn-F-G-conditions}
F|_{\op (\cF_\varepsilon )} = G|_{\op (\cL_\varepsilon )} = 0,\  F|_{\op (\cC_\varepsilon )} = G|_{\op (\cH_\varepsilon )} = 1.
\end{equation}
(Here $\op$ stands for {\it some} open
neighborhood of a set).
Note that, by \eqref{eqn-F-G-conditions},
\begin{equation}
\label{eqn-dF-wedge-dG-vanishes-on-op-Lambda-varepsilon}
dF\wedge dG|_{\op (\Lambda_\varepsilon)}\equiv 0.
\end{equation}
and thus $FdG|_{\Lambda_\varepsilon}$ is a closed form, which verifies condition 2 of Proposition~\ref{prop-FdG-Lambda}.

We claim that $[FdG|_{\Lambda_\varepsilon}] = -c[\lambda|_{\Lambda_\varepsilon}]$ for some $c>0$.
In order to check it, we need to show
that the integrals of $\lambda$ and $FdG$ over any $a\in H_1 (\Lambda_\varepsilon ;\R)$
are proportional with the proportionality coefficient $-c$ for some $c>0$ independent of
$a$. Note that
\[
H_1 (\Lambda_\varepsilon ;\R) \cong H_1 (L;\R)\oplus \R\oplus H_1 (K;\R),
\]
where the isomorphism is given by $\nu_*$ and the
middle summand $\R$ is generated by the homology class of the curve $\nu (\Gamma_\varepsilon)$.
Thus, it is enough to check the proportionality for $a=\nu_* b$,
where $b\in H_1 (L;\R)\oplus \R\oplus H_1 (K;\R)$ is one of the following types:

\smallskip
\noindent
1. $b\in H_1 (L;\R)$.

\smallskip
\noindent
2. $b = [\Gamma_\varepsilon]$.

\smallskip
\noindent
3. $b \in H_1 (K;\R)$.

Let us consider the first case. Note that the form $\eta$ vanishes on the zero section $K \subset T^*K$.
Thus, by \eqref{eqn-nu-respects-primitives},
\begin{equation}
\label{eqn-nu-lambda-s-lambda0-on-Lambda-varepsilon}
(\nu^* \lambda)|_{\Lambda_\varepsilon} = (s\lambda_0)|_{\Lambda_\varepsilon}\;,
\end{equation}
and hence
\[
\lambda (\nu_* b)  = (s\lambda_0) (b) = 0,
\]
since $b\in H_1 (L;\R)$ and the contact form $\lambda_0$ vanishes on the Legendrian submanifold $L$.
Since $FdG$ is zero on a neighborhood of $\cH$ and $b$ can be represented by a chain in $\cH$,
we also get $[FdG](b)=0$.

Let us move to the second case: $b = [\Gamma_\varepsilon]$.
A direct computation using \eqref{eqn-nu-lambda-s-lambda0-on-Lambda-varepsilon} and
the Stokes theorem shows that
\[
\lambda (\nu_* [\Gamma_\varepsilon])  = (s\lambda_0) (b) =
\int_{D_{\Gamma_\varepsilon}} d(s\lambda_0)  = (R_1 -R_0)T - \delta (\varepsilon) =:c >0,
\]
where the latter number $c$ is the Euclidean area of $D_{\gamma_\varepsilon}$.
At the same time another direct calculation using \eqref{eqn-F-G-conditions} yields
\[
\int_{\Gamma_\varepsilon} \nu^* (FdG) =  -1.
\]
Thus, in this case
\[
FdG (\nu_* b) = -\lambda (\nu_* b) /c.
\]

Finally, consider the third case: $b \in H_1 (K;\R)$.
By \eqref{eqn-nu-lambda-s-lambda0-on-Lambda-varepsilon} and since $s\lambda_0$ vanishes on $K$,
\[
\lambda (\nu_* b) = (s\lambda_0) (b) =0.
\]
Moreover, $\nu_* b$ can be represented by a curve that lies in $\cH$. Since, by \eqref{eqn-F-G-conditions},
$FdG$ is zero on a neighborhood of $\cH$, we get that
\[
FdG (\nu_* b) = 0.
\]

Thus, $[FdG|_{\Lambda_\varepsilon}] = -c[\lambda|_{\Lambda_\varepsilon}]$ for $c>0$.
This verifies the condition 3 of Proposition~\ref{prop-FdG-Lambda}.
Condition 1 is a part of the hypothesis of the theorem that we are proving.
Therefore Proposition~\ref{prop-FdG-Lambda} can be applied and we get
\[
\max_M \{F,G\}\geq 1/c=\bigg( (R_1-R_0)T - \delta (\varepsilon)\bigg)^{-1}.
\]
Since this is true for any $F,G\in C^\infty_c (M)$ satisfying \eqref{eqn-F-G-conditions}, we get
\[
pb_4^{M,+} (\cF_\varepsilon, \cC_\varepsilon, \cL_\varepsilon, \cH_\varepsilon) \geq \bigg( (R_1 -R_0)T
- \delta(\varepsilon)\bigg)^{-1},
\]
where $\delta (\varepsilon) \to 0$ as $\varepsilon\to 0$, as required.

This proves \eqref{eqn-lower-bound-on-pb4+-of-a-tetragon}. Together with \eqref{eqn-upper-bound-on-pb4+-of-a-tetragon} proved above
this implies the
proposition.
\Qed

\begin{rem}
\label{rem-generalization-to-weakly-non-exact-case} {\rm Recall that
a symplectic manifold $(M,\omega)$ is called {\it symplectically
aspherical} if $[\omega]$ vanishes on all homology classes in
$H_2(M,\R)$ represented by spheres, and a Lagrangian submanifold
$Z\subset (M,\omega)$ is called {\it weakly exact} if $[\omega]$
vanishes on the subspace $H_2^D (M,Z;\R) \subset H_2(M,Z;\R)$
generated by discs whose boundary lies on $Z$.

Theorem~\ref{thm-pb4-Mohnke} admits a rather straightforward
generalization to the case when $(M,\omega)$ is a symplectically
aspherical symplectic manifold and the Lagrangian isotopy class of
$\Lambda_\varepsilon$ does not contain weakly exact Lagrangian
submanifolds.

Namely, for such $(M,\omega)$ and $\Lambda$ assume in addition that
$$H_2^D (M,\Lambda;\R) = \im \nu_\sharp + Q,$$
where
\[
\nu_\sharp: H_2^D (U\times \cI\times W, \Lambda';\R)\to H_2^D (M,\Lambda;\R)
\]
is induced by $\nu$ and $Q\subset H_2^D (M,\Lambda;\R)$ is a
subgroup such that $\omega (Q)=0$ and $\partial (Q)$ lies in the
image of the natural inclusion $H_1 (L;\R)\to H_1 (\Lambda;\R)$.
Under these assumptions one can deduce the same conclusion as in
Theorem~\ref{thm-pb4-Mohnke}. }
\end{rem}

\subsection{Non-exactness of Lagrangians in Liouville manifolds}
\label{subsec-no-exact-Lagr-submfds-pfs}

In this section we describe classes of  Lagrangian tetragons in
exact symplectic manifolds $(M,d\lambda)$ whose Lagrangian isotopy classes contain no
exact Lagrangian submanifolds. Applying Corollary~\ref{cor-main} to such tetragons, we
shall prove our main results on the interlinking stated in the introduction.

Recall that an exact symplectic manifold $(M,\omega=d\lambda)$, with a fixed primitive $\lambda$ of the symplectic form $\omega$, is called {\it Liouville} if the
following two conditions hold:
\begin{itemize}
\item the flow $\theta_t$ of the Liouville vector field $v$ given by $i_v\omega = \lambda$ is complete;
 \item there exists
a closed connected
hypersurface $\Sigma$ in $M$ transversal to $v$ bounding a domain $U \subset M$ with
compact closure such that $M = U \sqcup \bigcup_{t \geq 0} \theta_t(\Sigma)$.
\end{itemize}
In this situation
$\lambda_0:=\lambda|_\Sigma$ is a contact $1$-form on $\Sigma$ defining a contact structure $\xi = \ker \lambda$. The set $\bigcup_{t\in \R} \theta_t(\Sigma) \subset M$ can be canonically symplectically identified with the symplectization $\Sigma\times \R_+$ of $\Sigma$, so that $\lambda$ is identified with the form $s\lambda_0$ on $\Sigma\times \R_+$.
We call the contact manifold $(\Sigma,\xi)$ {\it the ideal contact boundary} of the Liouville manifold $M$. One readily checks that the ideal contact boundary is uniquely defined up to a contactomorphism.

Examples of Liouvile manifolds include, for instance, the cotangent bundle $T^*V$ of a closed manifold $V$
equipped with the canonical Liouville form and the linear vector space $\R^{2n}(p,q)$ with the Liouville
form $(pdq-qdp)/2$. Here the ideal contact boundaries are the unit sphere bundle of $V$
with respect to any Riemannian metric, and the standard contact sphere, respectively. The product of two Liouville manifolds is again Liouville.

In what follows we focus on the case when $(\Sigma,\xi)$ is the ideal contact boundary of a Liouville manifold $M$, and fix the notation $\theta_t$ for the Liouville flow on $M$.

\medskip
\noindent
\begin{thm}
\label{thm-interlnkg-in-symplectn-for-bdry-of-Liouv-domain}
The exact symplectic manifold $(\Sigma \times \R_+\times T^*\T^n, d(s\lambda_0 + pdq))$ admits no closed exact Lagrangian
submanifolds. In particular, the pair $(\Sigma, L)$ is stably non-exact for every choice
of a closed Legendrian submanifold $L \subset \Sigma$.
\end{thm}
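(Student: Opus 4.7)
Suppose for contradiction that $L \subset (\Sigma \times \R_+ \times T^*\T^n, d(s\lambda_0 + p\,dq))$ is a closed exact Lagrangian submanifold. The plan is to embed this situation into a genuine Liouville manifold and then contradict the Floer-theoretic non-displaceability of closed exact Lagrangians. First I would use that $\Sigma$ is the ideal contact boundary of the Liouville manifold $M$: the map $(x,s) \mapsto \theta_{\log s}(x)$ gives an exact symplectic identification of $\Sigma \times \R_+$ with $M \setminus \mathrm{skel}(M)$, sending $s\lambda_0$ to the restriction of $\lambda$. Taking products with $T^*\T^n$, we obtain an exact symplectic embedding
\[
\iota \fcolon (\Sigma \times \R_+ \times T^*\T^n, d(s\lambda_0 + p\,dq)) \hookrightarrow (\widetilde{W}, d(\lambda + p\,dq)), \qquad \widetilde{W} := M \times T^*\T^n,
\]
so $\iota(L)$ is a closed exact Lagrangian in the Liouville manifold $\widetilde{W}$, automatically disjoint from the Liouville skeleton $\mathrm{skel}(\widetilde{W}) = \mathrm{skel}(M) \times \T^n$. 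It is essential that $\iota(L)$ avoids the skeleton, since $\widetilde{W}$ itself may well carry closed exact Lagrangians (e.g.\ zero sections of cotangent bundles) which necessarily pass through the skeleton.

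Next I would show that $\iota(L)$ is Hamiltonian-displaceable in $\widetilde{W}$. Being compact, $\iota(L)$ is contained in a slab $\Sigma \times [a,b] \times V$ with $V \subset T^*\T^n$ compact. The Liouville flow $\theta_\tau^{\widetilde{W}}(m,p,q) = (\theta_\tau^M(m), e^\tau p, q)$ is only conformally symplectic but takes this slab into the disjoint slab $\Sigma \times [e^\tau a, e^\tau b] \times e^\tau V$ as soon as $\tau > \log(b/a)$. I would approximate this conformal displacement by a compactly supported Hamiltonian isotopy of $\widetilde{W}$, by cutting off the Liouville vector field outside a large compact region containing both $\iota(L)$ and its target and correcting the resulting conformal deformation by an additional compactly supported symplectic isotopy, so as to produce a genuine Hamiltonian displacement of $\iota(L)$. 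This is the step I expect to be the main obstacle: the Liouville vector field of $\widetilde{W}$ is not itself Hamiltonian (its contraction with $\omega$ equals the Liouville form, which is not exact), so turning the conformal-symplectic expansion into a genuine Hamiltonian isotopy that achieves disjointness requires a careful global construction combining cut-offs with a symplectic correction.

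Finally, I would invoke the standard fact that for a closed exact Lagrangian $\iota(L)$ in the exact symplectic manifold $\widetilde{W}$, the self Floer cohomology $HF^*(\iota(L),\iota(L))$ is well-defined (no disc bubbling by exactness) and isomorphic to $H^*(L;\Z/2) \neq 0$, while Hamiltonian displaceability forces $HF^*(\iota(L),\iota(L)) = 0$ -- a contradiction proving the first claim. The stable non-exactness of $(\Sigma, L)$ then follows at once: the preceding argument with $n = m$ shows that $\Sigma \times \R_+ \times T^*\T^m$ admits no closed exact Lagrangian for any $m \in \Z_{\geq 0}$, so in particular the Lagrangian isotopy class of the smoothened tetragon $\Lambda_\varepsilon \times \T^m$ contains no exact representative, which is precisely the definition of stable non-exactness.
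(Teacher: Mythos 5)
Your overall strategy coincides with the paper's: embed into the Liouville manifold $\widetilde{W} = M\times T^*\T^n$, displace $\iota(L)$ by the Liouville flow, and contradict non-displaceability of closed exact Lagrangians (which you phrase via $HF^*(\iota(L),\iota(L))$, while the paper invokes Gromov's theorem -- the same input). The step you rightly flag as the main obstacle is, however, handled in a way that does not work as described. Cutting off the Liouville vector field $v$ (with $i_v\omega=\lambda$) by a bump function $\rho$ produces $\rho v$, for which $i_{\rho v}\omega=\rho\lambda$ is in general not closed; the flow of $\rho v$ is therefore not even symplectic, and it is unclear what it would mean to correct a non-symplectic diffeomorphism into a Hamiltonian one by composing with a compactly supported symplectic isotopy. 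The clean resolution, and the one the paper uses, is to notice that the Liouville flow $\theta_t^{\widetilde{W}}$ scales the Liouville primitive, $(\theta_t^{\widetilde{W}})^*(\lambda+p\,dq)=e^t(\lambda+p\,dq)$, so that $Z_t:=\theta_t^{\widetilde{W}}(\iota(L))$ is a closed \emph{exact} Lagrangian for every $t$. A smooth path of closed exact Lagrangians is automatically an exact Lagrangian isotopy: the one-form obtained by contracting the velocity field into $\omega$ restricts to an exact form on each $Z_t$, so the path is generated by a time-dependent Hamiltonian defined along the Lagrangians, which can be extended and cut off outside a compact neighborhood of the compact set $\bigcup_{0\le t\le\tau}Z_t$. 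This yields the required compactly supported Hamiltonian displacement of $\iota(L)$ with no need to tamper with the ambient Liouville vector field. With that substitution your argument is correct and recovers the paper's proof.
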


\begin{proof}

Assume on the contrary that there exists a closed exact Lagrangian
submanifold $Z$ of $(\Sigma\times \R_+\times T^* \T^m,d(s\lambda_0 +
pdq))$. Let $Z_t$ be the image of $Z$ under the Liouville flow of $M \times T^*T^m$  given by
$(x,p,q) \mapsto (\theta_t(x),tp,q)$. Note that Lagrangian submanifolds $Z_t$ are exact for all $t$, and $Z_\tau$ is disjoint from $Z$ for sufficiently large $\tau$. This contradicts to a theorem by Gromov stating
that a symplectically aspherical and geometrically bounded at infinity symplectic manifold -- and, in particular, any Liouville manifold -- does not admit a displaceable closed exact Lagrangian submanifold
\cite{Gromov}, \cite{ALP}.
\end{proof}

\medskip
\noindent
\begin{rem}\label{rem-weakly-exact-filling}{\rm
By a theorem of Gromov \cite{Gromov} (cf. \cite{ALP}),
any closed weakly exact Lagrangian submanifold of a geometrically bounded symplectically aspherical symplectic manifold $(M,\omega)$ is non-displaceable. The geometric boundedness assumption, which holds, for instance for
Liouville manifolds and cotangent bundles, enables one to control the behavior of pseudo-holomorphic curves
in $M$ ``at infinity". A minor modification of the above arguments
yields the following claim:

Let $(\Sigma, \ker \lambda_0)$ be an arbitrary (not necessarily closed) contact manifold.
Assume that for some $a>0$ the domain $\Sigma \times (a,+\infty) \subset (\Sigma \times \R_+, d(s\lambda_0))$ admits a symplectic embedding $\nu$ to a geometrically bounded symplectic manifold $(M,\omega)$. Let
$L \subset \Sigma$ be a closed connected Legendrian submanifold so that the morphism $H_2^D (\Sigma, L)\to H_2^D (M,L)/\ker \omega$ induced by $\nu$ is onto. (Here $\ker \omega$ is the kernel of the symplectic area map $D\mapsto \int_D \omega$ on $H_2^D (M,L)$).
Then the pair $(\Sigma,L)$ is stably non-exact.

If $\omega$ is exact, i.e.
$\omega=d\lambda$, the assumption on the morphism of the relative homology groups can be removed, provided $\nu^*\lambda= s\lambda_0$.

It would be interesting to find meaningful dynamical applications of this result.
}
\end{rem}

We have just proved that any Lagrangian tetragon in the symplectization $(\Sigma\times \R_+, d(s\lambda_0))$ of $\Sigma$ is not Lagrangian isotopic to an exact Lagrangian submanifold in $(\Sigma\times \R_+, d(s\lambda_0))$.  {\it Can it be Lagrangian isotopic to an exact Lagrangian submanifold in $M$}?
This question is still open. Below we answer it in negative in two particular cases.

First, we consider the class of subcritical Weinstein manifolds. Recall that a Liouville manifold is {\it Weinstein} if the Liouville vector field $v$ is gradient-like for a proper bounded from below Morse function $f$ with finite number of critical points. It is known that the indices of all critical points of $f$ are $\leq n$,
where $\dim M = 2n$. A Weinstein manifold $M$ is called {\it subcritical} if for some choice of $f$ all the indices are $< n$, and {\it critical} otherwise.

\medskip
\noindent
\begin{exam}
\label{exam-S-2k-1} {\rm The standard symplectic vector space $\R^{2n}(p,q)$ equipped with the Liouville form $\lambda= (pdq-qdp)/2$ is Weinstein subcritical. Indeed, the Liouville vector field
is $v=(p\partial/\partial p + q\partial/\partial q)/2$. It is gradient like for the proper non-negative function
${f}(p,q)=p^2+q^2$ whose only critical point is non-degenerate minimum at $0$. The ideal contact
boundary of $\R^{2n}$ is the unit sphere $\SP^{2n-1} \subset \R^{2n}$ equipped with standard contact structure
$\xi$ defined by the restriction of $\lambda$.}
\end{exam}

\medskip
\noindent More generally, the product $M \times \R^2$ with any Liouville $M$ is subcritical. In contrast to that, the cotangent bundle $T^*V$ of a closed manifold $V$ is critical.

\medskip
\noindent
\begin{thm}\label{thm-interlinking-in-subcritical-Weinstein-mfd}
Assume that $(M, \omega=d\lambda)$ is subcritical. Then the exact
symplectic manifold $(M \times T^* \T^m, d(\lambda + pdq))$ admits no
exact Lagrangian submanifolds.
\end{thm}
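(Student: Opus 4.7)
The strategy is to derive a contradiction between Cieliebak's splitting theorem for subcritical Weinstein manifolds and the Gromov non-displaceability result already quoted in Remark~\ref{rem-weakly-exact-filling}. Schematically, subcriticality forces a $\C$-factor, and any closed subset can then be Hamiltonian-displaced by using translations in that factor, contradicting the non-displaceability of closed exact Lagrangian submanifolds.

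First I would invoke Cieliebak's splitting theorem, which asserts that every subcritical Weinstein manifold is Weinstein-equivalent to a product $M' \times \C$, where $\C$ carries its standard Liouville form. Using this identification, one may work with the exact symplectic manifold
\[
(M' \times \C \times T^* \T^m,\, d(\lambda' + \lambda_\C + p\, dq)).
\]
This manifold is exact (hence symplectically aspherical) and geometrically bounded at infinity (as a product of Liouville manifolds and $T^*\T^m$), so Gromov's theorem in the form stated in Remark~\ref{rem-weakly-exact-filling} applies: every closed weakly exact Lagrangian in it is non-displaceable.

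Now suppose, for contradiction, that $Z \subset M \times T^* \T^m$ is a closed exact Lagrangian submanifold. Since exactness with respect to \emph{any} primitive implies weak exactness, the image of $Z$ under the symplectomorphism above is weakly exact, and hence non-displaceable by Gromov. On the other hand an explicit displacement of $Z$ is available: being compact, $Z$ projects into a bounded disk $D \subset \C$, and any compactly supported Hamiltonian on $\C$ that agrees on $D$ with a translation $z \mapsto z + a$ for $|a|$ sufficiently large, lifted trivially to the product, Hamiltonian-displaces $Z$. This contradiction finishes the proof.

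The main technical concern I would have to address is that Cieliebak's splitting is a statement about Weinstein structures up to homotopy, so the resulting symplectomorphism need not pull $\lambda' + \lambda_\C + p\, dq$ back to $\lambda + p\, dq$ on the nose. However, both sides of the contradiction depend only on the underlying symplectic form — Hamiltonian displaceability is purely symplectic, and Gromov's theorem applies to weakly exact Lagrangians, a property insensitive to the choice of primitive — so the mismatch between primitives does not affect the argument.
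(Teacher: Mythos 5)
Your proof is correct and uses the same core mechanism as the paper's: a contradiction between displaceability of compact subsets of $M \times T^*\T^m$ and Gromov's non-displaceability of closed exact (or weakly exact) Lagrangians in a geometrically bounded, symplectically aspherical manifold. The only difference is one of granularity — the paper cites Biran--Cieliebak's result that every compact subset of a subcritical Weinstein manifold is displaceable, whereas you reprove that fact from Cieliebak's splitting theorem by translating in the $\C$-factor; your observation that the primitive-mismatch under the splitting symplectomorphism is harmless because exactness implies weak exactness (a symplectic invariant, to which Gromov's theorem applies) is precisely the point the more direct citation lets the paper sidestep.
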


\begin{proof} Biran and Cieliebak showed \cite{Biran-Ciel-IsrJMath}
that every compact subset of $M$ (and a fortiori of $M \times T^* \T^m$)
is displaceable. The existence of closed exact Lagrangian submanifolds would
again violate the theorem of Gromov  cited in the previous proof, yielding a
contradiction.
\end{proof}

\medskip
\noindent
{\bf Proof of Theorem~\ref{thm-inter-2}.} We work in the situation of Example~\ref{exam-S-2k-1}.
By Theorem~\ref{thm-interlinking-in-subcritical-Weinstein-mfd} and Corollary \ref{cor-main}, any
Lagrangian tetragon $\Lambda\subset (\R^{2n}, dp\wedge dq)$, built
using any closed connected Legendrian submanifold of
$(\SP^{2n-1},\xi)$ and some $R_0,R_1,T$, is stably
$(R_1-R_0)T$-interlinked in $(\R^{2n}, dp\wedge dq)$.
\Qed

\medskip
Next, let  $V$ be a closed manifold, and $\Sigma \subset T^*V$ be the unit sphere bundle
with respect to any Riemannian metric on $V$. Even though $T^*V$ is a critical Weinstein manifold,
we have the following result.

\medskip
\noindent
\begin{thm}
\label{thm-no-exact-Lagr-isotopic-to-Lambda-in-cotangent-bundles}
The symplectic manifold $T^*V \times T^* \T^m$ admits no
exact Lagrangian submanifolds Lagrangian isotopic to $\Lambda_\varepsilon$.
\end{thm}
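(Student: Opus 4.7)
The plan is to argue by contradiction via the Maslov class, which is preserved under Lagrangian isotopy but will distinguish $\Lambda_\varepsilon$ from any closed exact Lagrangian of $T^*V\times T^*\T^m \cong T^*(V\times\T^m)$. Suppose $\Lambda'$ is such an exact Lagrangian, Lagrangian isotopic to $\Lambda_\varepsilon$.

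Since $c_1(T^*(V\times\T^m))=0$, every Lagrangian submanifold carries a well-defined integer Maslov class $\mu \in H^1(\cdot\,;\Z)$, preserved by Lagrangian isotopy (it depends only on the homotopy class of the classifying map into the Lagrangian Grassmannian bundle, which the isotopy deforms continuously). The first step is to compute $\mu_{\Lambda_\varepsilon}$ on the loop $\Gamma_\varepsilon = \Phi(\{x_0\}\times\gamma_\varepsilon)$ for a fixed $x_0\in L$. Using the splitting $T\Sigma = \xi\oplus\R v$ (with $v$ the Reeb vector field of $\lambda_0$) together with the Reeb flow to trivialize $T(\Sigma\times\R_+\times T^*K)$ along the loop, the Legendrian direction $d\psi_{t(\tau)}T_{x_0}L$ becomes constant in this trivialization (by the Legendrian condition $d\lambda_0|_{TL}=0$), so the Maslov class is captured entirely by the winding of the line $\R(\dot{t}v + \dot{s}\partial_s)$ in the rank-$2$ symplectic plane $\R v\oplus\R\partial_s$. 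As $\gamma_\varepsilon$ sweeps once around the smoothened rectangle $[R_0,R_1]\times[0,T]$, this line performs four successive quarter-turns, one at each corner, yielding a full rotation and hence $\mu_{\Lambda_\varepsilon}[\Gamma_\varepsilon] = 2$.

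The second step invokes the theorem of Abouzaid and Kragh (building on work of Fukaya-Seidel-Smith and Nadler): any closed exact Lagrangian submanifold of $T^*N$ for $N$ closed has vanishing Maslov class; more strongly, its projection to $N$ is a homotopy equivalence. Applied to $\Lambda'\subset T^*(V\times\T^m)$ this gives $\mu_{\Lambda'} = 0$. By the invariance of the Maslov class under Lagrangian isotopy one concludes $\mu_{\Lambda_\varepsilon} = 0$, contradicting $\mu_{\Lambda_\varepsilon}[\Gamma_\varepsilon]=2$ from the previous step.

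The hard part will be the Maslov computation on $\Gamma_\varepsilon$: while the intuitive picture of four quarter-turns is transparent, it must be carried out rigorously using compatible unitary trivializations near each smoothened corner, and one must verify that no extra contributions arise from the topology of $L$ or from the Reeb rotation (which does cancel by the Legendrian condition). A secondary concern is invoking the Abouzaid-Kragh vanishing of the Maslov class in sufficient generality -- for our setting of Lagrangians built from arbitrary closed Legendrians $L \subset \Sigma$ -- though the orientability and closedness assumptions needed are automatic in the applications to Theorems~\ref{thm-inter-1} and \ref{thm-inter-2}.
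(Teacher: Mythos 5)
Your proof is correct in outline, but it takes a genuinely different route from the paper. The paper also contradicts a Kragh-type rigidity theorem, but uses the \emph{$\Z_2$-degree} of the projection to the base rather than the Maslov class: by Kragh's result (the paper cited here, with Abouzaid's appendix), a closed exact Lagrangian in $T^*(V\times\T^m)$ projects with nonzero $\Z_2$-degree; Lagrangian isotopy preserves this degree; and then the paper observes that since the Lagrangian isotopy class of $\Lambda_\varepsilon$ is independent of the parameters $R_0,R_1,T$, one may take $T$ so small that the projection $\Lambda''_\varepsilon\to V$ misses most of $V$ (because $\dim L=\dim V-1$ so $\pi(L)\subsetneq V$), hence is not surjective, hence has degree zero -- contradiction. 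This ``shrink $T$'' trick is the elementary core of the paper's argument and sidesteps any trivialization bookkeeping.

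Your Maslov-class route trades that trick for a direct index computation $\mu_{\Lambda_\varepsilon}[\Gamma_\varepsilon]=2$, which is the right value (it matches the $k=1$ prototype, where $\Lambda_\varepsilon$ is a convex embedded loop in a cylinder), and the Reeb-flow splitting you propose is a sensible way to organize the computation. What this buys is that you never need to vary $T$ and the contradiction is ``local'' to the loop $\Gamma_\varepsilon$. What it costs: (i) the Maslov computation, which you rightly flag, must be done against a trivialization of $T(T^*(V\times\T^m))$ over a disc bounded by $\Gamma_\varepsilon$, and one must verify that the Reeb-flow trivialization on $U\times\cI\times W$ is taken by the symplectic embedding $\nu$ to a trivialization extending over that disc (this is fine since the disc $D_{\Gamma_\varepsilon}$ itself lies in the image of $\nu$, but it should be said); and (ii) the input theorem you need is the vanishing of the Maslov class of closed exact Lagrangians in cotangent bundles of closed manifolds -- this is indeed a theorem of Kragh, but it is in a different paper from the one the authors cite (the cited reference gives the $\Z_2$-degree / homotopy-equivalence statement, not the Maslov vanishing), so your attribution to ``Abouzaid and Kragh building on Fukaya--Seidel--Smith and Nadler'' is muddled and should be replaced by the correct reference. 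Neither point is a gap in the mathematics, but the second is a gap in the sourcing, and the first must be carried out to make the argument rigorous.
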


\begin{proof}
Assume, by contradiction, that $Z \subset T^* V\times T^* \T^m$ is an
exact Lagrangian submanifold Lagrangian isotopic to
$\Lambda_\varepsilon$. Then the $\Z_2$-degree of the projections $\Lambda_\varepsilon\to V\times \T^m$ and $Z\to V\times \T^m$
are equal.
By Kragh's theorem \cite[Cor. 1.1]{Kragh},
the latter degree is non-zero. (Recall that Kragh's theorem says that the $\Z_2$-degree of the projection $X\to Y$ of {\it any} closed exact Lagrangian submanifold $X$ of the cotangent bundle of {\it any} manifold $Y$ is non-zero).
Thus, the $\Z_2$-degree of the projection $\Lambda_\varepsilon\to V\times \T^m$ is non-zero.

On the other hand, note that the Lagrangian isotopy type of
$\Lambda_\varepsilon$ in $T^* V \times T^* \T^m$ is independent of
the parameters $R_0,R_1,T$ (where $\varepsilon$ is always chosen
sufficiently small depending on $R_0,R_1,T$). Thus, without loss of
generality, we may assume that $T$ is sufficiently small (depending
on $R_0$ and $R_1$) so that $\pi (\Lambda''_\varepsilon)$ lies in a
small neighborhood of $\pi (L)$ in $V$. Note that $\pi(L)\subsetneq
V$, since $\dim L = \dim V -1$. Therefore the projection $\pi:
\Lambda''_\varepsilon\to V$ is not surjective. Hence the projection
$\pi\times Id: \Lambda_\varepsilon\to V\times \T^m$ also is not
surjective and therefore the $\Z_2$-degree of the projection
$\Lambda_\varepsilon\to V\times \T^m$ is zero. We have got a
contradiction. This finishes the proof. \end{proof}

\medskip
\noindent
\begin{rem}
\label{rem-noncompact-Sigma}
{\rm
In fact, the same argument works verbatim for tetragons in $T^*V$ with non-compact $V$.

As we noted above, Theorem~\ref{thm-interlnkg-in-symplectn-for-bdry-of-Liouv-domain} implies that
if $\Sigma$ is the unit cotangent bundle (with respect to an arbitrary Riemannian metric)
of a {\it closed} manifold $V$, then for any closed connected Legendrian submanifold $L$ of $\Sigma$,
the pair $(\Sigma, L)$ is non-exact. The version of Theorem~\ref{thm-no-exact-Lagr-isotopic-to-Lambda-in-cotangent-bundles} for an arbitrary $V$
immediately yields the non-exactness of $(\Sigma, L)$ for any $V$.
}
\end{rem}

\medskip
\noindent
{\bf Proof of Theorem~\ref{thm-inter-1}.}
The Lagrangian tetragon $\Lambda$ in $T^* \T^k$ appearing in the statement of the theorem
is exactly the one constructed as above in the case where $\Sigma$ is
the unit sphere bundle with respect to the Euclidean metric on $V:= \T^k$, $L$ is a Legendrian fiber
of $\Sigma$, $K=\{\textrm{a point}\}$, and $0<R_0<R_1$, $0<T<1/2$ are some numbers.
Thus, Theorem~\ref{thm-no-exact-Lagr-isotopic-to-Lambda-in-cotangent-bundles} and Corollary \ref{cor-main}
can be applied, yielding that $\Lambda$ is stably $(R_1-R_0)T$-interlinked.
\Qed

\noindent
\begin{rem}
\label{rem-the-ineq-for-pb4+-is-exact-for-cot-bundle-of-1-dim-mfd}
{\rm
In view of Theorem~\ref{thm-pb-new} this means that $(R_1-R_0)T$ is the smallest
$\kappa$ for which the Lagrangian tetragon $\Lambda$ appearing in Theorem~\ref{thm-inter-1}
is $\kappa$-interlinked.
}
\end{rem}

\bigskip
\noindent
{\bf Acknowledgments.} We thank Vadim Kaloshin for very useful discussions and stimulating questions on Hamiltonian instabilities and Albert Fathi for communicating to us Theorem~\ref{thm-fathi}. We are grateful to Paul Biran and Tobias Ekholm for useful comments on Reeb dynamics and to an anonymous referee for corrections.
L.P. thanks the University of Chicago,
where parts of this paper were written, for an excellent research atmosphere.

\bibliographystyle{alpha}

\end{document}